\newtheorem{theorem}{Theorem}[section]
\newtheorem{corollary}[theorem]{Corollary}
\newtheorem{definition}[theorem]{Definition}
\newtheorem{example}{Example}
\newtheorem{lemma}[theorem]{Lemma}
\newtheorem{remark}[theorem]{Remark}
\newenvironment{proof}[1][Proof]{\textbf{#1.} }{\ \rule{0.5em}{0.5em}}
\title{On the doubly stochastic realization of spectra}
\author{ Issam Kaddoura \\
	Department of Mathematics,\\ Lebanese International University,\\  Saida, Lebanon\\ 
	\texttt{issam.kaddoura@liu.edu.lb} \\
	\And
	Bassam Mourad \\
	Department of Mathematics,\\  Faculty of Science, Lebanese University,\\ Beirut, Lebanon\\
	\texttt{ bmourad@ul.edu.lb} \\
   }
\begin{document}

\title{  On a special class of block matrices and some applications}

\maketitle

\begin{abstract}
In this paper,  we study a particular class of block matrices placing an emphasis on their spectral properties.  Some related applications are then presented.
\end{abstract}
\keywords{generalized permutation matrices,  block generalized  permutation matrices, nonnegative matrices, doubly stochastic matrices, inverse eigenvalue problem, matrix number theory}
\textbf{AMS CLASSIFICATION}: 15A18, 15A51,15A24, 15A29, 12D10.

\section{Introduction}
Recall that every element $\pi$
in the permutation group $S_n$ corresponds to a permutation matrix
 $ P_{\pi}=(p_{ij})$ given by
  $$ p_{ij} =\left\{
\begin{array}
[c]{cc}
1 & \text{if} \ j=\pi(i)\\
 &            \\
0 & \ \text{ otherwise}.
\end{array}
\right . $$

 Let $\mathbb{C}$,  $\mathbb{Z}$  and $\mathbb{N}$ denote the field of complex  numbers,  the ring of integers and the set of natural numbers, respectively.  The $n\times n$ diagonal matrix whose $i$-th diagonal entry is $u_i$ will be denoted by diag$(u_1,...,u_n)$.
 A square matrix having in each row and column only one nonzero element is called a \emph{generalized permutation} (or \emph{monomial}) matrix.

 For any vector $u=[u_1,...,u_n]^T$ in $\mathbb{C}^n$, denote by $D_u=$diag$(u_1,...,u_n)$ and for any permutation $\pi\in S_n$, let  $\pi(u)=[u_{\pi(1)},...,u_{\pi(n)}]^T\in \mathbb{C}^n$. If $v=[v_1,...,v_n]^T\in \mathbb{C}^n$, then the Hadamard product of $u$ and $v$  which is denoted by $u\odot v$, is defined in the normal way as  $u\odot v=[u_1v_1,...,v_nv_n]^T$.

 It is easy to see that an $n\times n$ matrix $U$ is a generalized permutation matrix if and only if $U=D_uP_{\pi}$, in which $u\in \mathbb{C}^n$, $\pi\in S_n$ and $D_u=$diag$(u_1,...,u_n)$. One may also check that
\begin{equation}P_{\pi}D_u= D_{\pi(u)}P_{\pi}.\end{equation}

Next, recall that \emph{the order} of a permutation $\pi_{s},$ which is denoted by $O(\pi_{s})$, is defined as the smallest
positive integer $d$ such that
$\pi_{s}^{d}=\pi_0$  where $\pi_0$ is the identity in  $S_n$.
It is clear that if $d$ is the order of a permutation $\pi_s$ and  $P_{\pi_{s}}$ is the permutation matrix corresponding to it,
then $P_{\pi_{s}}^{d}=I_n$ where $I_n$ is the $n\times n$ identity matrix.

 The following lemma is needed for our purposes where its proof can be found in \cite{kad}.
\begin{lemma}
 Let $s\in\{0,1,2,3.....,n-1\}$,  $n\in \mathbb{N}$ and  $\pi_{s}\in S_n$ such that  $\pi_{s}(i)\equiv i+s\operatorname{mod}(n).$ Then,
\begin{enumerate}
\item
$
d=O(\pi_{s})=\frac{\operatorname{lcm}(n,s)}{s}=\frac{n}{\gcd(n,s)}.
$
\item  $n$  is prime if only if  $O(\pi_{s})=n$.
 \item  If  $\gcd(n,s)=1$,  then $O(\pi_{s})=n$.
\item   If
$O(\pi_{s})=\frac{\operatorname{lcm}(n,s)}{s}=d,$ then
$
\pi_{s}^{qd+r}=\pi_{s}^{r}%
$
for any integer $q$ and $0\leq r<d.$
\item  If  $O(\pi_{s})=d$, then $\pi_{s}
^{r_{1}}=\pi_{s}^{r_{2}}$ if and only if $r_{1}= r_{2}( \operatorname{mod} d).$

\end{enumerate}
\end{lemma}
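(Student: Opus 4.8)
The plan is to prove each of the five items of the lemma about the cyclic shift permutation $\pi_s$ defined by $\pi_s(i)\equiv i+s\pmod{n}$, treating them largely in sequence since the later parts lean on the earlier ones. The central observation driving everything is that iterating $\pi_s$ simply adds $s$ each time: one checks that $\pi_s^{k}(i)\equiv i+ks\pmod{n}$, which I would establish by a quick induction on $k$. Once this formula is in hand, $\pi_s^{k}=\pi_0$ (the identity) holds exactly when $ks\equiv 0\pmod{n}$ for every residue $i$, i.e. when $n\mid ks$.

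For item (1), I would compute the order as the least positive $k$ with $n\mid ks$. The smallest such $k$ is precisely $\operatorname{lcm}(n,s)/s$, since $ks$ must be a common multiple of $n$ and $s$, and the least positive multiple of $s$ that is also divisible by $n$ is $\operatorname{lcm}(n,s)$; dividing by $s$ gives $k$. The identity $\operatorname{lcm}(n,s)/s=n/\gcd(n,s)$ then follows from the standard relation $\operatorname{lcm}(n,s)\gcd(n,s)=ns$. For item (3), if $\gcd(n,s)=1$ then $n/\gcd(n,s)=n$, so $O(\pi_s)=n$ is immediate from (1). Item (2) needs both directions: if $n$ is prime then for $s\in\{1,\dots,n-1\}$ we have $\gcd(n,s)=1$, forcing $O(\pi_s)=n$ by (3); conversely, I would argue the contrapositive, showing that if $n$ is composite then some $s$ yields $\gcd(n,s)>1$ and hence $O(\pi_s)=n/\gcd(n,s)<n$. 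Here I should be careful about the quantifier in the statement and the role of $s$, since the claim as phrased implicitly ranges over the relevant $s$; pinning down exactly what is being asserted is the one place demanding attention.

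For items (4) and (5), I would work purely with the group-theoretic fact that $\pi_s$ generates a cyclic group of order $d$. Writing $qd+r$ with $0\le r<d$, we get $\pi_s^{qd+r}=(\pi_s^{d})^{q}\pi_s^{r}=\pi_0^{q}\pi_s^{r}=\pi_s^{r}$, giving (4) directly. For item (5), the forward direction is the standard fact that in a cyclic group of order $d$, two powers coincide iff their exponents are congruent modulo $d$: if $r_1\equiv r_2\pmod d$ then $\pi_s^{r_1-r_2}=\pi_0$ by (4), and conversely $\pi_s^{r_1}=\pi_s^{r_2}$ gives $\pi_s^{r_1-r_2}=\pi_0$, whence $d$ divides $r_1-r_2$ by minimality of the order.

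The bulk of the argument is routine modular arithmetic together with the elementary theory of cyclic groups, so I do not anticipate a genuine obstacle. The one subtle point, as noted, is the precise reading of item (2): the cleanest treatment is to fix the convention that $s$ ranges over $\{1,\dots,n-1\}$ and interpret the statement accordingly, after which both implications reduce cleanly to item (1) via the behavior of $\gcd(n,s)$.
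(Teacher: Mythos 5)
Your proof is correct, and it is the standard argument: the paper does not actually prove this lemma in-text (it defers to the reference \cite{kad}), so there is no in-paper proof to compare against, but your route via $\pi_s^{k}(i)\equiv i+ks\pmod{n}$ and the characterization of the order as the least $k$ with $n\mid ks$ is surely the intended one. Your caution about item (2) is warranted (the ``if and only if'' only holds when quantified over all $s\in\{1,\dots,n-1\}$, since e.g.\ $n=4$, $s=1$ gives $O(\pi_1)=4$ with $n$ composite), and the only other loose end is the degenerate case $s=0$, where $\operatorname{lcm}(n,s)/s$ is undefined; both are defects of the statement rather than of your argument.
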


 If $ P=(p_{ij})$ is the permutation matrix corresponding to the permutation  $\pi_{1}(i)\equiv i+1\operatorname{mod}(n)$  i.e. $P$ is  the $n\times n$  permutation matrix with 1 in the positions $(i,i+1)$ and $(n, 1)$  for all $i=1,\ldots n-1$, then  a circulant matrix $C\in M_n(\mathbb{C})$ is an element in the algebra generated by $P$ i.e. $C$ is of the form $$ C=c_0I_n+c_1P+\ldots +c_{n-1}P^{n-1} $$ for some complex numbers $c_0,...,c_{n-1}$. As the eigenvalues and eigenvectors of $P$ are well known, so are those of any circulant matrix $C$.

 In \cite{kad}, we generalized the class of circulant matrices to another class of  matrices which we called generalized weighted circulant matrices  as follows.
Let $G_w$ be the class of all generalized permutation matrices corresponding
to the following family of permutations $\pi_{s}(i)\equiv
i+s\operatorname{mod}(n)$ where $s\in w:=\{0,1,2,3.....,n-1\}$ and with $\pi_{0}(i)=\pi_{n}(i)\equiv i\operatorname{mod}(n)$ is the identity in $S_n$. Then, a generalized weighted circulant matrix ia an element in the algebra generated by an element $T$ in $G_w$. The main objective in \cite{kad} was to find  explicit expressions for the eigenvalues and eigenvectors of any $T\in G_w$, and to present some applications based on these formulas.

In this work, we shall generalize the class of  generalized weighted circulant matrices to what we call generalized weighted circulant block matrices. Roughly speaking, the idea is to start with any generalized permutation matrix $X$ in $G_w$  and replace the nonzero entries in $X$ by square block matrices of the same size to obtain a block generalized permutation matrix $M_X$ and then a generalized weighted circulant block matrix is an element in the algebra generated by $M_X$. Our main objective here is to study the spectral properties of this new class of matrices and to present some related applications as well.

We shall start by generalizing the notation introduced at the beginning to block matrices. If $A=(A_{ij})$ is a matrix partitioned into blocks, then to avoid any confusion we shall sometimes write  $A(i,j)$ to refer to the block $A_{ij}$ specifically when $i$ and $j$ are long expressions.

Recall that $A$ is said to be a \emph{block generalized permutation} matrix (BGPM) if $A$ has only one nonzero block in each block row and block column.
It is easy to check that an $n\times n$ block matrix $A$ having each block of size $k\times k$ is a block generalized permutation matrix with nonzero blocks $A_{1.\pi(1)},\ldots,A_{n.\pi(n)} $ if and only if $A=D_A(P_{\pi}\otimes I_k)$ where $D_A=\texttt{diag} (A_{1,\pi(1)},...,A_{n,\pi(n)})=\bigoplus \limits_{i=1}^{n}A_{i,\pi(i)}$ and $\otimes$ stands for the Kronecker product of matrices. Similar to (1), an inspection shows that for any  $\pi,  \partial \in S_n $, the following equality is valid
\begin{equation}
 \biggl(P_{\pi}\otimes I_k\biggr)\bigoplus \limits_{i=1}^{n}A_{i,\partial(i)}=\bigoplus\limits_{i=1}^{n}A_{\pi(i), \partial\circ\pi(i)}  \biggl(P_{\pi}\otimes I_k\biggr)
 \end{equation}


Next, we present the following useful lemma that exhibits a formula
for multiplying distinct block generalized permutation matrices.

\begin{lemma} Suppose that  $U_{\pi}=\left(
{\displaystyle\bigoplus\limits_{i=1}^{n}}
U(i,\pi(i))\right)  \biggl(P_{\pi}\otimes I_k\biggr)$ and $V_{\partial}=
\left({\displaystyle\bigoplus\limits_{i=1}^{n}}
V(i,\partial(i))\right)  \biggl(P_{\partial}\otimes I_k\biggr)$ are two  block generalized $kn\times kn$
permutation matrices corresponding to the permutations $\pi$ and $\partial$
respectively,
 \ then
$$
U_{\pi}V_{\partial}    =\left(
%
{\displaystyle\bigoplus\limits_{i=1}^{n}}
U_{i,\pi(i)}V_{\pi(i),\partial\circ\pi(i)}\right) \biggl( P_{\partial\circ\pi}\otimes I_k\biggr),$$  \mbox{ and  similarly }
$V_{\partial}U_{\pi}   =\left(
%
{\displaystyle\bigoplus\limits_{i=1}^{n}}
V_{i,\partial(i)}U_{\partial(i),\pi\circ\partial(i)}\right)  \biggl(P_{\pi\circ\partial}\otimes I_k\biggr).
$
\end{lemma}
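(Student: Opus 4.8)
The plan is to prove the first identity by a direct computation that reduces everything to the commutation relation (2), and then to obtain the second identity by the evident symmetry (interchanging the roles of $(U,\pi)$ and $(V,\partial)$). First I would substitute the defining expressions to get
$$U_{\pi}V_{\partial} = \left(\bigoplus_{i=1}^{n} U(i,\pi(i))\right)\biggl(P_{\pi}\otimes I_k\biggr)\left(\bigoplus_{i=1}^{n} V(i,\partial(i))\right)\biggl(P_{\partial}\otimes I_k\biggr).$$
The crucial move is to slide the factor $P_{\pi}\otimes I_k$ to the right past the block-diagonal matrix $\bigoplus_{i=1}^{n} V(i,\partial(i))$. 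This is exactly the situation covered by (2) with $A=V$, which gives
$$\biggl(P_{\pi}\otimes I_k\biggr)\bigoplus_{i=1}^{n} V(i,\partial(i)) = \left(\bigoplus_{i=1}^{n} V\bigl(\pi(i),\partial\circ\pi(i)\bigr)\right)\biggl(P_{\pi}\otimes I_k\biggr).$$

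After this substitution the two block-diagonal factors become adjacent, and I would use that the product of two block-diagonal matrices is again block-diagonal with the blocks multiplied entry by entry, so that the $i$-th diagonal block becomes $U(i,\pi(i))\,V(\pi(i),\partial\circ\pi(i))$. The two Kronecker factors likewise become adjacent; here I would invoke the mixed-product property $(P_{\pi}\otimes I_k)(P_{\partial}\otimes I_k) = (P_{\pi}P_{\partial})\otimes I_k$ together with the composition rule for permutation matrices. Using the convention $(P_{\pi})_{ij}=\delta_{j,\pi(i)}$ fixed at the start, a one-line index computation gives $P_{\pi}P_{\partial}=P_{\partial\circ\pi}$, so the trailing factor is $P_{\partial\circ\pi}\otimes I_k$, which yields precisely the asserted formula.

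The second identity follows verbatim by swapping $(U,\pi)\leftrightarrow(V,\partial)$ throughout. There is no genuine technical obstacle here: the argument is entirely a matter of applying (2) once and tracking indices. The one point demanding care is the order of composition, namely ensuring that the off-diagonal index of $V$ after the commutation reads $\partial\circ\pi(i)$ rather than $\pi\circ\partial(i)$, and that this is consistent with the trailing permutation $P_{\partial\circ\pi}$; this consistency is exactly what the bookkeeping in (2) and in the computation of $P_{\pi}P_{\partial}$ guarantees.
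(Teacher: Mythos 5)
Your proposal is correct and follows essentially the same route as the paper's own proof: substitute the definitions, apply identity (2) to slide $P_{\pi}\otimes I_k$ past the block-diagonal factor of $V_{\partial}$, then combine the adjacent block-diagonal and Kronecker factors. The only difference is that you spell out the mixed-product property and the index check for $P_{\pi}P_{\partial}=P_{\partial\circ\pi}$, which the paper treats as immediate.
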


\begin{proof}  By making use of (2), we can write
\begin{align*}
U_{\pi}V_{\partial}  &  =\left(
{\displaystyle\bigoplus\limits_{i=1}^{n}}
U(i,\pi(i))\right)  \biggl(P_{\pi}\otimes I_k\biggr) \left({\displaystyle\bigoplus\limits_{i=1}^{n}}
V(i,\partial(i))\right)  \biggl(P_{\partial}\otimes I_k\biggr)\\
 &  =\left(
{\displaystyle\bigoplus\limits_{i=1}^{n}}
U(i,\pi(i))\right)  \left({\displaystyle\bigoplus\limits_{i=1}^{n}}
V(\pi(i),\partial\circ\pi (i))\right)  \biggl(P_{\pi}\otimes I_k\biggr)  \biggl(P_{\partial}\otimes I_k\biggr)\\
&  =\left(
{\displaystyle\bigoplus\limits_{i=1}^{n}}
U_{i,\pi(i)}V_{\pi(i),\partial\circ\pi(i)}\right) \biggl(P_{\partial\circ\pi}\otimes I_k\biggr).
\end{align*}
Of course, the second equality is now obvious.
\end{proof}

\begin{lemma}Suppose that  $U_{\pi}=\left(
{\displaystyle\bigoplus\limits_{i=1}^{n}}
U(i,\pi(i))\right)  \biggl(P_{\pi}\otimes I_k\biggr)$ is a BGPM. Then for any positive integer $r$, we have that
 $U_{\pi}^{r}=\left(
{\displaystyle\bigoplus\limits_{i=1}^{n}}
{\displaystyle\prod\limits_{j=1}^{r}}
U(\pi^{j-1}(i),\pi^{j}(i))\right)  \biggl(P_{\pi ^r}\otimes I_k\biggr).$
\end{lemma}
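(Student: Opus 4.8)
The plan is to argue by induction on $r$, using the product formula from the preceding lemma as the engine of the inductive step. The base case $r=1$ is immediate: the claimed expression collapses to the definition of $U_{\pi}$ itself, since the single-term product $\prod_{j=1}^{1}U(\pi^{0}(i),\pi^{1}(i))=U(i,\pi(i))$ and $P_{\pi^{1}}=P_{\pi}$.

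For the inductive step, I would assume the formula holds for some $r\geq 1$, so that $U_{\pi}^{r}$ is itself a BGPM associated with the permutation $\pi^{r}$, whose $(i,\pi^{r}(i))$ block equals $\prod_{j=1}^{r}U(\pi^{j-1}(i),\pi^{j}(i))$. I would then write $U_{\pi}^{r+1}=U_{\pi}^{r}\,U_{\pi}$ and apply the multiplication formula from the previous lemma, taking the first factor to be $U_{\pi}^{r}$ (with permutation $\pi^{r}$ playing the role of $\pi$ there) and the second factor to be $U_{\pi}$ (with permutation $\pi$ playing the role of $\partial$ there). The resulting permutation is $\pi\circ\pi^{r}=\pi^{r+1}$, which matches the factor $P_{\pi^{r+1}}\otimes I_k$ in the target.

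The heart of the argument is then the block entry produced by that formula, namely $\big(\prod_{j=1}^{r}U(\pi^{j-1}(i),\pi^{j}(i))\big)\,U(\pi^{r}(i),\pi^{r+1}(i))$, where the last factor is exactly the new term $U(\pi^{(r+1)-1}(i),\pi^{r+1}(i))$ appended on the right. This simplifies to $\prod_{j=1}^{r+1}U(\pi^{j-1}(i),\pi^{j}(i))$, completing the induction.

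I expect the only genuine subtlety to be bookkeeping rather than any deep idea. Two points need care: first, correctly matching the symbols $\pi^{r}$ and $\pi$ of our computation to the symbols $\pi$ and $\partial$ appearing in the statement of the multiplication lemma; and second, because the blocks $U(i,j)$ need not commute, verifying that right-multiplication by $U_{\pi}$ appends the index $j=r+1$ on the right end of the ordered product, consistent with the left-to-right convention in the statement. Once these are confirmed, the inductive step is a direct substitution.
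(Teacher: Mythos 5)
Your proposal is correct and follows essentially the same route as the paper: induction on $r$ with the block multiplication lemma driving the inductive step. The only difference is that you decompose as $U_{\pi}^{r+1}=U_{\pi}^{r}\,U_{\pi}$ (appending the new factor on the right, which makes the product index line up immediately), whereas the paper writes $U_{\pi}^{r}=U_{\pi}\,U_{\pi}^{r-1}$ and then reindexes the product by shifting $j\mapsto j+1$; both versions handle the noncommutativity of the blocks correctly.
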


\begin{proof}  We proceed by induction on $r$. Obviously,  it is true for $r=1$.
Suppose now that the formula is true for $r-1$  i.e.
$$ U_{\pi}^{r-1}=\left(
{\displaystyle\bigoplus\limits_{i=1}^{n}}
{\displaystyle\prod\limits_{j=1}^{r-1}}
U(\pi^{j-1}(i),\pi^{j}(i))\right)   \biggl(P_{\pi ^{r-1}}\otimes I_k\biggr)=\left(
%
{\displaystyle\bigoplus\limits_{i=1}^{n}}
B(i,\partial(i))\right)  \biggl(P_{\partial}\otimes I_k\biggr)$$
where $B(i,\partial(i))=%
{\displaystyle\prod\limits_{j=1}^{r-1}}
U(\pi^{j-1}(i),\pi^{j}(i))$ and $\partial=\pi^{r-1}.$
Now, making use of the previous lemma we obtain
\begin{align*}
U_{\pi}^{r}  &  = U_{\pi}.U_{\pi}^{r-1}=\left(
%
{\displaystyle\bigoplus\limits_{i=1}^{n}}
U_{i,\pi(i)}B_{\pi(i),\partial\circ\pi(i)}\right) \biggl(P_{\partial\circ\pi}\otimes I_k\biggr)  \\
&  =\left(
%
{\displaystyle\bigoplus\limits_{i=1}^{n}}
U_{i,\pi(i)}B_{\pi(i),\pi^{r-1}\circ\pi(i)}\right)   \biggl(P_{\pi ^{r-1}\circ\pi}\otimes I_k\biggr)   =\left(
%
{\displaystyle\bigoplus\limits_{i=1}^{n}}
U_{i,\pi(i)}B_{\pi(i),\pi^{r}(i)}\right) \biggl(P_{\pi ^{r}}\otimes I_k\biggr)\\
&  =\left(
%
{\displaystyle\bigoplus\limits_{i=1}^{n}}
U(i,\pi(i)).%
{\displaystyle\prod\limits_{j=1}^{r-1}}
U(\pi^{j-1}(\pi(i)),\pi^{j}(\pi(i)))\right)   \biggl(P_{\pi ^{r}}\otimes I_k\biggr)\\
&    =\left(
{\displaystyle\bigoplus\limits_{i=1}^{n}}
U(i,\pi(i)).%
{\displaystyle\prod\limits_{j=1}^{r-1}}
U(\pi^{j}(i),\pi^{j+1}(i))\right)   \biggl(P_{\pi ^{r}}\otimes I_k\biggr) \\
&  =\left(
{\displaystyle\bigoplus\limits_{i=1}^{n}}
U(i,\pi(i)).%
{\displaystyle\prod\limits_{j-1=1}^{r-1}}
U(\pi^{j-1}(i),\pi^{j}(i))\right)   \biggl(P_{\pi ^{r}}\otimes I_k\biggr) \\
&        =\left(
{\displaystyle\bigoplus\limits_{i=1}^{n}}
U(i,\pi(i)).%
{\displaystyle\prod\limits_{j=2}^{r}}
U(\pi^{j-1}(i),\pi^{j}(i))\right)   \biggl(P_{\pi ^{r}}\otimes I_k\biggr)   \\
&  =\left(
{\displaystyle\bigoplus\limits_{i=1}^{n}}
{\displaystyle\prod\limits_{j=1}^{r}}
U(\pi^{j-1}(i),\pi^{j}(i))\right)   \biggl(P_{\pi ^{r}}\otimes I_k\biggr),%
\end{align*}
and the proof is complete.
\end{proof}

\begin{corollary} If $P_{\pi_s}$ is a permutation matrix corresponding to
$\pi_{s}(i)\equiv i+s\operatorname{mod}(n)$  where $s\in \{0,1,2,3.....,n-1\}$, then
\[
\biggl(P_{\pi_{s}}\otimes I_k\biggr)^{O(\pi_{s})}=I.
\]
\end{corollary}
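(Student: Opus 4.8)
The plan is to apply the preceding lemma (the power formula for a BGPM $U_\pi^r$) to the special block matrix in which every nonzero block equals the identity $I_k$, so that the block data disappears and only the permutation part survives. Concretely, I would take $U_\pi = \left(\bigoplus_{i=1}^n I_k\right)(P_{\pi_s}\otimes I_k)$, which is exactly $P_{\pi_s}\otimes I_k$ since $\bigoplus_{i=1}^n I_k = I_{nk}$. With this choice one has $U(i,\pi_s(i)) = I_k$ for every $i$, so the corollary is just the statement that the $d$-th power of this $U_\pi$ equals the identity, where $d := O(\pi_s)$.

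Next I would invoke the power formula with $r = d$. Each product $\prod_{j=1}^{d} U(\pi_s^{j-1}(i),\pi_s^{j}(i))$ is a product of $d$ copies of $I_k$, hence equals $I_k$, so the direct-sum factor collapses to $\bigoplus_{i=1}^n I_k = I_{nk}$. The formula therefore reduces $(P_{\pi_s}\otimes I_k)^d$ to the single remaining factor $P_{\pi_s^{d}}\otimes I_k$.

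The only substantive point left is to identify $P_{\pi_s^{d}}$ with $I_n$. This is immediate from the definition of the order: $d = O(\pi_s)$ is by definition the least positive integer with $\pi_s^{d} = \pi_0$, the identity permutation (equivalently, part (4) of the first lemma with $q=1$, $r=0$), whence $P_{\pi_s^{d}} = P_{\pi_0} = I_n$ and so $P_{\pi_s^{d}}\otimes I_k = I_{nk}$. Assembling the two factors gives $(P_{\pi_s}\otimes I_k)^d = I_{nk}\cdot I_{nk} = I$, as claimed.

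I do not expect a genuine obstacle here; the statement is essentially a bookkeeping corollary of the power formula once the blocks are specialized to $I_k$. For completeness I would remark that one can bypass the block lemma altogether by using the mixed-product property of the Kronecker product, $(A\otimes B)(C\otimes D) = (AC)\otimes(BD)$, which yields $(P_{\pi_s}\otimes I_k)^{d} = P_{\pi_s}^{d}\otimes I_k^{d} = I_n\otimes I_k = I$ directly; but since the corollary is placed right after the power formula, routing the argument through that formula keeps the exposition self-contained.
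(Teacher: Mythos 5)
Your argument is correct, but your primary route is not the one the paper takes: the paper's proof is exactly the one-line alternative you relegate to a closing remark, namely the mixed-product property $(P_{\pi_s}\otimes I_k)^{O(\pi_s)} = P_{\pi_s}^{O(\pi_s)}\otimes I_k^{O(\pi_s)} = I_n\otimes I_k = I_{nk}$. Your main derivation instead specializes the BGPM power formula of the preceding lemma to the case where every block is $I_k$, collapses the direct-sum factor to $I_{nk}$, and then identifies $P_{\pi_s^d}$ with $I_n$ from the definition of the order; every step there is sound, and it has the expository virtue you note of staying inside the block-matrix machinery just developed. The trade-off is that it is longer and leans on a lemma that is not actually needed here, whereas the Kronecker identity settles the matter immediately; the paper opts for the shorter computation. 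Either proof is acceptable, and since you supplied both, nothing is missing.
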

\begin{proof}
 $\biggl(P_{\pi_{s}}\otimes I_k\biggr)^{O(\pi_{s})}= \biggl(P_{\pi_{s}}\biggr) ^{O(\pi_{s})}\otimes \biggl( I_k\biggr)^{O(\pi_{s})}
=I_n\otimes I_k=I_{nk}.$
\end{proof}

Now, we are in a position to present formulaes for the eigenvalues and the characteristic polynomial of  any
block generalized permutation matrix corresponding to the family of the
permutations $\pi_{s}(i)\equiv i+s\operatorname{mod}(n),s\in \{0,1,2,3.....,n-1\}$ \ and $n\in \mathbb{N}.$
\begin{theorem} Let $U_{\pi_s}=\left(
{\displaystyle\bigoplus\limits_{i=1}^{n}}
U(i,\pi_s(i))\right)  \biggl(P_{\pi_s}\otimes I_k\biggr)$ be a $kn\times kn$ block generalized permutation
matrix where $s \in
\{1,2,3.....,n-1\},$ $\pi_{s}(i)\equiv i+s\operatorname{mod}(n),$  and
$U(i,\pi_{s}(i)) $ is a  $k\times k$ matrix for each $i=1,2,3,..,n.$  Moreover, let $d=O(\pi_{s})$ and $g=\gcd(n,s)$. Then,

$$ U_{\pi_{s}}^{d}  =%
{\displaystyle\bigoplus\limits_{i=1}^{dg}}
\biggl(%
{\displaystyle\prod\limits_{j=1}^{d}}
U(i+(j-1)s,i+js)\biggr).$$
Therefore,
the characteristic polynomial $p(x)$ of$\ U_{\pi_{s}}^{d}$ is given by

\[
\ \ \ p(x)=%
{\displaystyle\prod\limits_{i=1}^{g}}
\det\left(
{\displaystyle\prod\limits_{j=1}^{d}}
U\bigl(\pi_{s}^{j-1}(i),\pi_{s}^{j}(i)\bigr)-xI_{k}\right)  ^{d},
\]

and hence the eigenvalues $\mu(U_{\pi_{s}})$ of $U_{\pi_{s}}$ are determined by solving%

\[
\mu^{d}(U_{\pi_{s}})=\lambda_{i}(U_{\pi_{s}}^{d}), \ \ \ i=1,2,..g.
\]
\end{theorem}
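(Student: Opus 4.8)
The plan is to obtain the three assertions in sequence, the first being an immediate specialization of the power formula proved above, and the latter two requiring an analysis of the orbit structure of $\pi_{s}$.

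First I would apply the preceding lemma (the formula for $U_{\pi}^{r}$) with $\pi=\pi_{s}$ and $r=d$, which yields
\[
U_{\pi_{s}}^{d}=\left({\displaystyle\bigoplus\limits_{i=1}^{n}}{\displaystyle\prod\limits_{j=1}^{d}}U\bigl(\pi_{s}^{j-1}(i),\pi_{s}^{j}(i)\bigr)\right)\biggl(P_{\pi_{s}^{d}}\otimes I_{k}\biggr).
\]
Since $d=O(\pi_{s})$, the Corollary above gives $P_{\pi_{s}^{d}}\otimes I_{k}=I_{nk}$, so the trailing factor collapses. Writing $\pi_{s}^{j-1}(i)\equiv i+(j-1)s$ and $\pi_{s}^{j}(i)\equiv i+js\pmod{n}$, and recalling $n=dg$ from part (1) of the first lemma, this is exactly the claimed block-diagonal expression. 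The reductions of indices modulo $n$ here are routine and I would only check them briefly.

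Next, for the characteristic polynomial, I note that $U_{\pi_{s}}^{d}$ is block diagonal with diagonal blocks $B_{i}:={\displaystyle\prod\limits_{j=1}^{d}}U(\pi_{s}^{j-1}(i),\pi_{s}^{j}(i))$, so $p(x)={\displaystyle\prod\limits_{i=1}^{n}}\det(B_{i}-xI_{k})$. The heart of the argument is that the blocks are cyclic rearrangements of one another along each orbit of $\pi_{s}$. Concretely, setting $A_{\ell}=U(\pi_{s}^{\ell-1}(i),\pi_{s}^{\ell}(i))$, one has $B_{i}=A_{1}A_{2}\cdots A_{d}$, while the block indexed by $\pi_{s}(i)$ equals $A_{2}\cdots A_{d}A_{1}$ (the product closes up because $\pi_{s}^{d}=\mathrm{id}$ identifies the last factor with $A_{1}$). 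The key step is then to invoke the standard fact that $\det(PQ-xI_{k})=\det(QP-xI_{k})$ for square $P,Q$ of the same size: taking $P=A_{1}$ and $Q=A_{2}\cdots A_{d}$ gives $\det(B_{i}-xI_{k})=\det(B_{\pi_{s}(i)}-xI_{k})$, and iterating along the orbit shows that the $d$ blocks indexed by a single orbit share one characteristic polynomial.

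To close the polynomial formula I would use that $\pi_{s}$ partitions $\{1,\dots,n\}$ into $g=\gcd(n,s)$ orbits, each of length $d$ (the cosets of $\langle s\rangle=\langle g\rangle$ in $\mathbb{Z}/n\mathbb{Z}$), and that $i=1,\dots,g$ form a transversal. Grouping the $n$ factors of $p(x)$ by orbit and applying the shift-invariance above, each orbit contributes its representative's determinant raised to the power $d$, giving $p(x)={\displaystyle\prod\limits_{i=1}^{g}}\det(B_{i}-xI_{k})^{d}$. For the eigenvalue statement I would apply the spectral mapping principle that the eigenvalues of $U_{\pi_{s}}^{d}$ are the $d$-th powers of those of $U_{\pi_{s}}$, so the spectrum of $U_{\pi_{s}}$ is recovered by solving $\mu^{d}=\lambda_{i}(U_{\pi_{s}}^{d})$ over the $g$ distinct block eigenvalues. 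The main obstacle is the characteristic-polynomial reduction: recognizing that successive diagonal blocks along an orbit are cyclic permutations of the same factor list and that $\det(PQ-xI_{k})=\det(QP-xI_{k})$ turns this into equality of characteristic polynomials. Once that is secured, the orbit count and the extraction of eigenvalues are straightforward.
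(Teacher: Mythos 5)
Your proposal is correct and follows essentially the same route as the paper: specialize the power formula to $r=d$, collapse $P_{\pi_{s}^{d}}\otimes I_{k}$ to the identity, and then argue that the $d$ diagonal blocks along each orbit of $\pi_{s}$ share a characteristic polynomial, so the $n$ determinant factors group into $g$ orbit classes each raised to the power $d$. The only difference is that where the paper dismisses the equality of the blocks' spectra as ``well known,'' you supply the explicit justification via cyclic rearrangement and $\det(PQ-xI_{k})=\det(QP-xI_{k})$, which is a welcome addition rather than a deviation.
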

\begin{proof}
From the preceding lemma,  we know that   $$U_{\pi}^{d}=\left(
{\displaystyle\bigoplus\limits_{i=1}^{n}}
{\displaystyle\prod\limits_{j=1}^{d}}
U(\pi_s^{j-1}(i),\pi_s^{j}(i))\right)  \biggl(P_{\pi ^d}\otimes I_k\biggr),$$ and  from
 Lemma 1.1,  we conclude that $n=gd$ and $P_{\pi_{s}}^{d}=I.$  This implies that
$$ U_{\pi_{s}}^{d}    =%
{\displaystyle\bigoplus\limits_{i=1}^{dg}}
\left(
{\displaystyle\prod\limits_{j=1}^{d}}
U(i+(j-1)s,i+js)\right). $$
It is well known    that the diagonal blocks of  $U_{\pi_s}^{d}$  which are
$
{\displaystyle\prod\limits_{j=1}^{d}}
U(i+(j-1)s,i+js),$ \\
$
{\displaystyle\prod\limits_{j=1}^{d}}
U(i+g+(j-1)s,i+g+js),\ldots,%
{\displaystyle\prod\limits_{j=1}^{d}}
U(i+(d-1)g+(j-1)s,i+(d-1)g+js)$\\
have the same eigenvalues.  This in turn means that the following $d$ determinants are equal\\
$ \det\left(
{\displaystyle\prod\limits_{j=1}^{d}}
U(i+(j-1)s,i+js)-xI_{k}\right)
 =\det\left(
{\displaystyle\prod\limits_{j=1}^{d}}
U(i+g+(j-1)s,i+g+js)-xI_{k}\right) $\\
  $=\ldots=\det\left(
{\displaystyle\prod\limits_{j=1}^{d}}
U(i+(d-1)g+(j-1)s,i+(d-1)g+js)-xI_{k}\right).
$
Consequently, the characteristic polynomial of$\ U_{\pi_{s}}^{d}$ is given by
\[
\ \ \ p(x)=%
{\displaystyle\prod\limits_{i=1}^{g}}
\det\left(
{\displaystyle\prod\limits_{j=1}^{d}}
A(\pi_{s}^{j-1}(i),\pi_{s}^{j}(i))-xI_{k}\right)  ^{d}.
\]
This completes the proof.
\end{proof}

The particular case $s=1$ gives the following result which is Theorem 1 in  Mirsky\cite{mir} (see also \cite{pet}).
\begin{corollary} Let \begin{eqnarray} U_{\pi_1}=\left(
\begin{array}{ccccc}
0  & U_1 & 0 & \ldots & 0 \\
0 & 0 & U_2 \ \ & \ddots  &  \vdots \\
\vdots &  \vdots     & \ddots& \ddots & 0\\
 0   & 0       & \ldots  &  0  & U_{n-1}  \\
 U_n & 0        & \ldots        &  0  &  0 \\
\end{array}
\right)
\end{eqnarray} where the  $U_i$ are $k\times k $ matrices. If the eigenvalues of the $k\times k $ matrix  $U_1U_2...U_n$ are $w_1, \ w_2,\ldots, \ w_k $. Then,
the eigenvalues of $U_{\pi_1}$ are the $kn$ numbers consisting of all
$n$-th roots of each of the numbers $w_1, \ w_2, \ ,..., \ w_k $.
\end{corollary}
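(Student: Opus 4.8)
The plan is to specialize the Theorem to the case $s=1$ and then promote the spectral information about $U_{\pi_1}^{n}$ to a complete description of the spectrum of $U_{\pi_1}$ itself. First I would record that for $s=1$ we have $g=\gcd(n,1)=1$ and $d=O(\pi_1)=n$, so that $\pi_1^{j-1}(1)=j$ and $\pi_1^{j}(1)\equiv j+1\pmod n$. Feeding this into the Theorem, the single product (there are $g=1$ of them) that appears is $\prod_{j=1}^{n}U(j,j+1)=U_1U_2\cdots U_n$, where the last factor $U(n,n+1)=U(n,1)=U_n$ by the cyclic block pattern of $U_{\pi_1}$. Hence the characteristic polynomial of $U_{\pi_1}^{n}$ collapses to $\det\bigl(U_1U_2\cdots U_n-xI_k\bigr)^{n}=\prod_{j=1}^{k}(x-w_j)^{n}$, so the eigenvalues of $U_{\pi_1}^{n}$ are exactly $w_1,\ldots,w_k$, each with multiplicity $n$, accounting for all $kn$ eigenvalues of $U_{\pi_1}^{n}$.

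Next I would extract the necessary condition on the eigenvalues of $U_{\pi_1}$. By the spectral mapping property, if $\mu$ is an eigenvalue of $U_{\pi_1}$ then $\mu^{n}$ is an eigenvalue of $U_{\pi_1}^{n}$, whence $\mu^{n}\in\{w_1,\ldots,w_k\}$; equivalently, every eigenvalue of $U_{\pi_1}$ is an $n$-th root of some $w_j$. This already shows that the spectrum of $U_{\pi_1}$ is contained in the set of all $n$-th roots of $w_1,\ldots,w_k$.

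The heart of the matter is the reverse inclusion, namely that \emph{all} $n$ of the $n$-th roots of each $w_j$ actually occur, and with the correct multiplicities. Here the naive route of building an eigenvector of $U_{\pi_1}$ from an eigenvector $x$ of $U_1\cdots U_n$ (setting $v_1=x$ and solving the chain $\mu v_i=U_iv_{i+1}$) breaks down, since it requires the blocks $U_i$ to be invertible, which is not assumed; this is the main obstacle. To bypass it I would exploit a rotational symmetry of $U_{\pi_1}$. Putting $\omega=e^{2\pi i/n}$ and $D=\bigoplus_{i=1}^{n}\omega^{i-1}I_k$, a direct block computation using $\omega^{n}=1$ gives $D^{-1}U_{\pi_1}D=\omega\,U_{\pi_1}$ (the $(i,i+1)$ blocks pick up a factor $\omega$, and the $(n,1)$ block picks up $\omega^{1-n}=\omega$). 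Thus $U_{\pi_1}$ and $\omega U_{\pi_1}$ are similar, hence share a characteristic polynomial, and so the multiset of eigenvalues of $U_{\pi_1}$ is invariant under multiplication by $\omega$.

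Finally I would combine the three ingredients. Since each $w_j$ is an eigenvalue of $U_{\pi_1}^{n}$, at least one $n$-th root of $w_j$ must be an eigenvalue of $U_{\pi_1}$; applying the $\omega$-invariance repeatedly then forces all $n$ distinct $n$-th roots of $w_j$ (for $w_j\neq0$) into the spectrum. This produces $kn$ values in total, matching the $kn$ eigenvalues of $U_{\pi_1}$ counted with multiplicity, so the inclusion of the second paragraph is in fact an equality and the multiplicities are exactly pinned down. The degenerate case $w_j=0$, where the $n$-th roots collapse to $0$, is consistent as well, because taking $n$-th powers preserves the algebraic multiplicity of the eigenvalue $0$. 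This yields precisely the asserted description of the spectrum of $U_{\pi_1}$.
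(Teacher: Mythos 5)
Your proposal is correct, and it is in fact more complete than what the paper does. The paper obtains this corollary purely as the specialization $s=1$ of Theorem 1.5: there $g=1$, $d=n$, the characteristic polynomial of $U_{\pi_1}^{n}$ collapses to $\det\bigl(U_1U_2\cdots U_n-xI_k\bigr)^{n}$ exactly as in your first paragraph, and the paper then simply asserts that the eigenvalues of $U_{\pi_1}$ are ``determined by solving $\mu^{n}=\lambda_i(U_{\pi_1}^{n})$,'' without arguing why \emph{every} $n$-th root of each $w_j$ actually occurs, or with what multiplicity. You correctly identify this as the nontrivial half of the statement (and correctly note that the naive eigenvector-chain construction fails when the blocks $U_i$ are singular), and you close the gap with the diagonal similarity $D^{-1}U_{\pi_1}D=\omega U_{\pi_1}$, $D=\bigoplus_{i=1}^{n}\omega^{i-1}I_k$, which makes the spectrum invariant under multiplication by $\omega$; combined with the spectral mapping theorem and the multiplicity count this pins the spectrum down exactly (for each distinct nonzero value $w$ of multiplicity $m$ in $U_1\cdots U_n$, the $nm$ eigenvalues of $U_{\pi_1}$ lying over $w$ form an $\omega$-invariant multiset on the $n$ roots, hence each root has multiplicity $m$). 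So the two routes agree on the computation of $U_{\pi_1}^{n}$, but your rotational-symmetry argument supplies the rigor that the paper's one-line deduction from Theorem 1.5 leaves implicit; the paper's approach buys brevity and uniformity over all $s$, while yours buys a self-contained and airtight proof of the $s=1$ case.
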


 To illustrate our results, we start with the following simple example.
\begin{example}  Consider $n=4, \ k=2, \ s=2,$ and $\pi_{2}(i)\equiv
i+2\operatorname{mod}4.$  Let  $U_{\pi_{2}}$ be given by
$U_{1,\pi_{2}(1)}=U(1,3)=\left(
\begin{array}
[c]{cc}%
a & 0\\
0 & b
\end{array}
\right)  , \ U_{2,\pi_{2}(2)}=U(2,4)=\left(
\begin{array}
[c]{cc}%
c & 0\\
0 & d
\end{array}
\right)  ,$ \
$U_{3,\pi_{2}(3)}=U(3,1)=\left(
\begin{array}
[c]{cc}%
x & 0\\
0 & y
\end{array}
\right)  , \ U_{4,\pi_{2}(4)}=U(4,2)=\left(
\begin{array}
[c]{cc}%
z & 0\\
0 & t
\end{array}
\right),   $ so that
\[
U_{\pi_{2}}=\left(
\begin{array}
[c]{cccc}%
\left(
\begin{array}
[c]{cc}%
{\small 0} & {\small 0}\\
{\small 0} & {\small 0}%
\end{array}
\right)  & \left(
\begin{array}
[c]{cc}%
{\small 0} & {\small 0}\\
{\small 0} & {\small 0}%
\end{array}
\right)  & \left(
\begin{array}
[c]{cc}%
{\small a} & {\small 0}\\
{\small 0} & {\small b}%
\end{array}
\right)  & \left(
\begin{array}
[c]{cc}%
{\small 0} & {\small 0}\\
{\small 0} & {\small 0}%
\end{array}
\right) \\
\left(
\begin{array}
[c]{cc}%
{\small 0} & {\small 0}\\
{\small 0} & {\small 0}%
\end{array}
\right)  & \left(
\begin{array}
[c]{cc}%
{\small 0} & {\small 0}\\
{\small 0} & {\small 0}%
\end{array}
\right)  & \left(
\begin{array}
[c]{cc}%
{\small 0} & {\small 0}\\
{\small 0} & {\small 0}%
\end{array}
\right)  & \left(
\begin{array}
[c]{cc}%
{\small c} & {\small 0}\\
{\small 0} & {\small d}%
\end{array}
\right) \\
\left(
\begin{array}
[c]{cc}%
{\small x} & {\small 0}\\
{\small 0} & {\small }y%
\end{array}
\right)  & \left(
\begin{array}
[c]{cc}%
{\small 0} & {\small 0}\\
{\small 0} & {\small 0}%
\end{array}
\right)  & \left(
\begin{array}
[c]{cc}%
{\small 0} & {\small 0}\\
{\small 0} & {\small 0}%
\end{array}
\right)  & \left(
\begin{array}
[c]{cc}%
{\small 0} & {\small 0}\\
{\small 0} & {\small 0}%
\end{array}
\right) \\
\left(
\begin{array}
[c]{cc}%
{\small 0} & {\small 0}\\
{\small 0} & {\small 0}%
\end{array}
\right)  & \left(
\begin{array}
[c]{cc}%
{\small z} & {\small 0}\\
{\small 0} & {\small t}%
\end{array}
\right)  & \left(
\begin{array}
[c]{cc}%
{\small 0} & {\small 0}\\
{\small 0} & {\small 0}%
\end{array}
\right)  & \left(
\begin{array}
[c]{cc}%
{\small 0} & {\small 0}\\
{\small 0} & {\small 0}%
\end{array}
\right)
\end{array}
\right).
\]
Clearly, the order of $\pi_{2}$ is $d=O(\pi_{2})=\dfrac{\operatorname{lcm}(4,2)}{2}=2,$ and
$g=gcd(4,2)=2,$ so that in order to
 determine the spectrum of $U_{\pi_{2}},$ we first  need to determine the spectrum of
the block generalized permutation matrix $U_{\pi_{2}}^{2}.$ Obviously,
\begin{align*}
U_{\pi_{2}}^{2}  &  =%
{\displaystyle\bigoplus\limits_{i=1}^{2}}
{\displaystyle\prod\limits_{j=1}^{2}}
U(i+2(j-1),i+2j)%
{\displaystyle\bigoplus\limits_{i=3}^{4}}
{\displaystyle\prod\limits_{j=1}^{2}}
U(i+2(j-1)s,i+2j)\\
&  =
{\displaystyle\bigoplus\limits_{i=1}^{2}}
U(i,i+2)U(i+2,i)%
{\displaystyle\bigoplus\limits_{i=3}^{4}}
U(i,i+2)U(i+2,i)\\
&  =U(1,3)U(3,1)%
{\displaystyle\bigoplus}
U(2,4)U(4,2)%
{\displaystyle\bigoplus}
U(3,1)U(1,3)%
{\displaystyle\bigoplus}
U(4,2)U(2,4))\\
&  =\allowbreak\left(
\begin{array}
[c]{cc}%
{\small ax} & {\small 0}\\
{\small 0} & {\small by}%
\end{array}
\right)
{\displaystyle\bigoplus}
\allowbreak\left(
\begin{array}
[c]{cc}%
{\small cz} & {\small 0}\\
{\small 0} & {\small dt}%
\end{array}
\right)
{\displaystyle\bigoplus}
\allowbreak\left(
\begin{array}
[c]{cc}%
{\small ax} & {\small 0}\\
{\small 0} & {\small by}%
\end{array}
\right)
{\displaystyle\bigoplus}
\left(
\begin{array}
[c]{cc}%
{\small cz} & {\small 0}\\
{\small 0} & {\small dt}%
\end{array}
\right)
\end{align*}
Therefore, the eigenvalues of $U_{\pi_{2}}$ are  clearly  given by
$ \sqrt{ax}, \ -\sqrt{ax} , \ \sqrt{by}, \ -\sqrt{by}, \ \sqrt{cz}, \ -\sqrt{cz}, \ \sqrt{dt}, \ -\sqrt{dt}.$
\end{example}

Nest,  we take a similar numeric example but with minor changes namely  the nonzero diagonal blocks are taken to be non-diagonal.
\begin{example}  Consider $n=4, \ k=2, \ s=2,$ and $\pi_{2}(i)\equiv
i+2\operatorname{mod}4.$  Let  $U_{\pi_{2}}$ be given by
$U_{1,\pi_{2}(1)}=U(1,3)=\left(
\begin{array}
[c]{cc}%
1 & 2\\
1 & -1
\end{array}
\right)  , \ U_{2,\pi_{2}(2)}=U(2,4)=\left(
\begin{array}
[c]{cc}%
3 & 0\\
2 & -2
\end{array}
\right)  ,$ \
$U_{3,\pi_{2}(3)}=U(3,1)=\left(
\begin{array}
[c]{cc}%
2 & 5\\
0 & 0
\end{array}
\right)  , \ U_{4,\pi_{2}(4)}=U(4,2)=\left(
\begin{array}
[c]{cc}%
5 & 1\\
-1 & -1
\end{array}
\right),   $ so that
\[
U_{\pi_{2}}=\left(
\begin{array}
[c]{cccc}%
\left(
\begin{array}
[c]{cc}%
{\small 0} & {\small 0}\\
{\small 0} & {\small 0}%
\end{array}
\right)  & \left(
\begin{array}
[c]{cc}%
{\small 0} & {\small 0}\\
{\small 0} & {\small 0}%
\end{array}
\right)  & \left(
\begin{array}
[c]{cc}%
{\small 1} & {\small 2}\\
{\small 1} & {\small -1}%
\end{array}
\right)  & \left(
\begin{array}
[c]{cc}%
{\small 0} & {\small 0}\\
{\small 0} & {\small 0}%
\end{array}
\right) \\
\left(
\begin{array}
[c]{cc}%
{\small 0} & {\small 0}\\
{\small 0} & {\small 0}%
\end{array}
\right)  & \left(
\begin{array}
[c]{cc}%
{\small 0} & {\small 0}\\
{\small 0} & {\small 0}%
\end{array}
\right)  & \left(
\begin{array}
[c]{cc}%
{\small 0} & {\small 0}\\
{\small 0} & {\small 0}%
\end{array}
\right)  & \left(
\begin{array}
[c]{cc}%
{\small 3} & {\small 0}\\
{\small 2} & {\small -2}%
\end{array}
\right) \\
\left(
\begin{array}
[c]{cc}%
{\small 2} & {\small 5}\\
{\small 0} & {\small 0}%
\end{array}
\right)  & \left(
\begin{array}
[c]{cc}%
{\small 0} & {\small 0}\\
{\small 0} & {\small 0}%
\end{array}
\right)  & \left(
\begin{array}
[c]{cc}%
{\small 0} & {\small 0}\\
{\small 0} & {\small 0}%
\end{array}
\right)  & \left(
\begin{array}
[c]{cc}%
{\small 0} & {\small 0}\\
{\small 0} & {\small 0}%
\end{array}
\right) \\
\left(
\begin{array}
[c]{cc}%
{\small 0} & {\small 0}\\
{\small 0} & {\small 0}%
\end{array}
\right)  & \left(
\begin{array}
[c]{cc}%
{\small 5} & {\small 1}\\
{\small -1} & {\small -1}%
\end{array}
\right)  & \left(
\begin{array}
[c]{cc}%
{\small 0} & {\small 0}\\
{\small 0} & {\small 0}%
\end{array}
\right)  & \left(
\begin{array}
[c]{cc}%
{\small 0} & {\small 0}\\
{\small 0} & {\small 0}%
\end{array}
\right)
\end{array}
\right).
\]
Clearly, the order of $\pi_{2}$ is $d=O(\pi_{2})=\dfrac{\operatorname{lcm}(4,2)}{2}=2,$ and
$g=gcd(4,2)=2,$ so that in order to
 determine the spectrum of $U_{\pi_{2}},$ we first  need to determine the spectrum of
the block generalized permutation matrix $U_{\pi_{2}}^{2}.$ Obviously,
\begin{align*}
U_{\pi_{2}}^{2}  &  =%
{\displaystyle\bigoplus\limits_{i=1}^{2}}
{\displaystyle\prod\limits_{j=1}^{2}}
U(i+2(j-1),i+2j)%
{\displaystyle\bigoplus\limits_{i=3}^{4}}
{\displaystyle\prod\limits_{j=1}^{2}}
U(i+2(j-1)s,i+2j)\\
&  =
{\displaystyle\bigoplus\limits_{i=1}^{2}}
U(i,i+2)U(i+2,i)%
{\displaystyle\bigoplus\limits_{i=3}^{4}}
U(i,i+2)U(i+2,i)\\
&  =U(1,3)U(3,1)%
{\displaystyle\bigoplus}
U(2,4)U(4,2)%
{\displaystyle\bigoplus}
U(3,1)U(1,3)%
{\displaystyle\bigoplus}
U(4,2)U(2,4))\\
&  =\allowbreak\left(
\begin{array}
[c]{cc}%
{\small 2} & {\small 5}\\
{\small 2} & {\small 5}%
\end{array}
\right)
{\displaystyle\bigoplus}
\allowbreak\left(
\begin{array}
[c]{cc}%
{\small 15} & {\small 3}\\
{\small 12} & {\small 4}%
\end{array}
\right)
{\displaystyle\bigoplus}
\allowbreak\left(
\begin{array}
[c]{cc}%
{\small 7} & {\small -1}\\
{\small 0} & {\small 0}%
\end{array}
\right)
{\displaystyle\bigoplus}
\left(
\begin{array}
[c]{cc}%
{\small 17} & {\small -2}\\
{\small -5} & {\small 2}%
\end{array}
\right)
\end{align*}

In other words,

\[
U_{\pi_{2}}^{2}=\left(
\begin{array}
[c]{cccccccc}%
{\small 2} & {\small 5} & {\small 0} & {\small 0} & {\small 0} & {\small 0} &
{\small 0} & {\small 0}\\
{\small 2} & {\small 5} & {\small 0} & {\small 0} & {\small 0} & {\small 0} &
{\small 0} & {\small 0}\\
{\small 0} & {\small 0} & {\small 15} & {\small 3} & {\small 0} & {\small 0} &
{\small 0} & {\small 0}\\
{\small 0} & {\small 0} & {\small 12} & {\small 4} & {\small 0} & {\small 0} &
{\small 0} & {\small 0}\\
{\small 0} & {\small 0} & {\small 0} & {\small 0} & {\small 7} & {\small -1} &
{\small 0} & {\small 0}\\
{\small 0} & {\small 0} & {\small 0} & {\small 0} & {\small 0} & {\small 0} &
{\small 0} & {\small 0}\\
{\small 0} & {\small 0} & {\small 0} & {\small 0} & {\small 0} & {\small 0} &
{\small 17} & {\small -2}\\
{\small 0} & {\small 0} & {\small 0} & {\small 0} & {\small 0} & {\small 0} &
{\small -5} & {\small 2}%
\end{array}
\right),
\]
and a simple check shows that the characteristic polynomial  of $U_{\pi_{2}}^{2}$ is given by
$p(x)= x^{2}\left(  x-7\right)  ^{2}\left(  -19x+x^{2}+24\right)  ^{2}.$ Therefore,
the eigenvalues of $U_{\pi_{2}}^{2}$ can be exhibited as $ \frac{19}{2}+\frac{1}%
{2}\sqrt{265}, \ \frac{19}{2}+\frac{1}{2}\sqrt{265}, \ \frac{19}{2}-\frac{1}{2}%
\sqrt{265}, \ \frac{19}{2}-\frac{1}{2}\sqrt{265}, \ 0, \ 0, \ 7, \ 7.$ However,
 this in turn means that the eigenvalues of $U_{\pi_{2}}$ are  given by
$ \frac{1}{2}\sqrt{2}\sqrt{19-\sqrt{265}%
}, \ -\frac{1}{2}\sqrt{2}\sqrt{19-\sqrt{265}}, \ \frac{1}{2}\sqrt{2}%
\sqrt{\sqrt{265}+19}, \ -\frac{1}{2}\sqrt{2}\sqrt{\sqrt{265}+19}, \ 0, \ 0, \ \sqrt{7},\\ -\sqrt{7}.$
\end{example}

\begin{definition}  Let $U_{\pi_s}=\left(
{\displaystyle\bigoplus\limits_{i=1}^{n}}
U(i,\pi_s(i))\right)  \biggl(P_{\pi_s}\otimes I_k\biggr)$ be a $kn\times kn$ block generalized permutation
matrix where $s \in
\{1,2,3.....,n-1\},$ $\pi_{s}(i)\equiv i+s\operatorname{mod}(n),$  and
$U(i,\pi_{s}(i)) $ is a $k\times k$ matrix for each $i=1,2,3,..,n.$  Then,
a block generalized permutation circulant matrix $C$ is an element in the algebra generated by
$U_{\pi_{s}}$  i.e.
\[
C=\sum_{r=0}^{m}c_{r}(U_{\pi_{s}})^{r},
\]
where $m\in \mathbb{N}$  and  the $c_{r}$ are complex constants.
\end{definition}

 Making use  of  the preceding theorem,  we conclude the following.
\begin{theorem} Let $U_{\pi_s}=\left(
{\displaystyle\bigoplus\limits_{i=1}^{n}}
U(i,\pi_s(i))\right)  \biggl(P_{\pi_s}\otimes I_k\biggr)$ be a $kn\times kn$ block generalized permutation
matrix where $s \in
\{1,2,3.....,n-1\},$ $\pi_{s}(i)\equiv i+s\operatorname{mod}(n),$  and
$U(i,\pi_{s}(i)) $ is a $k\times k$ matrix for each $i=1,2,3,..,n.$ Moreover, let  $d=O(\pi_{s})$ and
$g=\gcd(n,s). $
If  $C=\sum_{r=0}^{m}c_{r}(U_{\pi_{s}})^{r}$ \ is a generalized permutation
circulant matrix generated by $U_{\pi_{s}}$, then the eigenvalues of $C$ are given by
\[
\lambda_{t,r}(C)=\sum_{r=0}^{m}c_{r}\left(  \sqrt[d]{\lambda_{t}}\right)
^{r}, \ t=1,2,\ldots,g.
\]
where $\lambda_{t}$ is an eigenvalue of $
{\displaystyle\prod\limits_{j=1}^{d}}
U(\pi_{s}^{j-1}(t),\pi_{s}^{j}(t))$ for $t=1,2,..., g.$
\end{theorem}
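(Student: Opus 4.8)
The plan is to exploit the fact that $C$ is, by definition, a polynomial in the single matrix $U_{\pi_{s}}$, so that its spectrum is obtained by applying that polynomial to the spectrum of $U_{\pi_{s}}$, which the preceding theorem has already computed. Set $p(z)=\sum_{r=0}^{m}c_{r}z^{r}$, so that $C=p(U_{\pi_{s}})$.

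First I would record the spectral mapping step. By the Schur decomposition there is a unitary $Q$ with $Q^{*}U_{\pi_{s}}Q=T$ upper triangular, whose diagonal entries are exactly the eigenvalues $\mu$ of $U_{\pi_{s}}$ listed with multiplicity. Since $C=p(U_{\pi_{s}})$, we have $Q^{*}CQ=p(T)$; and because $T$ is upper triangular so is $p(T)$, with diagonal entries $p(\mu)$. Hence the eigenvalues of $C$ are precisely the numbers $p(\mu)$ as $\mu$ ranges over the eigenvalues of $U_{\pi_{s}}$.

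Next I would substitute the description of the spectrum of $U_{\pi_{s}}$ furnished by the preceding theorem. There it was shown that every eigenvalue $\mu$ of $U_{\pi_{s}}$ satisfies $\mu^{d}=\lambda_{t}$, where $\lambda_{t}$ runs over the eigenvalues of the $k\times k$ products $\prod_{j=1}^{d}U(\pi_{s}^{j-1}(t),\pi_{s}^{j}(t))$ for $t=1,\ldots,g$, and that all $d$ of the $d$-th roots $\sqrt[d]{\lambda_{t}}$ actually occur in the spectrum. Feeding $\mu=\sqrt[d]{\lambda_{t}}$ into the conclusion of the previous paragraph yields
$$\lambda_{t,r}(C)=p\left(\sqrt[d]{\lambda_{t}}\right)=\sum_{r=0}^{m}c_{r}\left(\sqrt[d]{\lambda_{t}}\right)^{r},\qquad t=1,\ldots,g,$$
which is the asserted formula; here, for each $t$, the symbol $\sqrt[d]{\lambda_{t}}$ is understood to range over all $d$ distinct $d$-th roots of $\lambda_{t}$.

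The only genuinely delicate point, and the one I would treat most carefully, is the bookkeeping of multiplicities concealed in the phrase ``all $d$-th roots occur.'' Because the map $\mu\mapsto\mu^{d}$ is $d$-to-one, knowing the spectrum of $U_{\pi_{s}}^{d}$ alone does not pin down that of $U_{\pi_{s}}$; this is exactly what the preceding theorem settled, where the structure of $P_{\pi_{s}}\otimes I_{k}$ forces each value $\lambda_{t}$ of multiplicity $d$ in $U_{\pi_{s}}^{d}$ to split into its $d$ separate $d$-th roots in $U_{\pi_{s}}$. Since that theorem is available, nothing beyond the elementary spectral mapping argument above is required, and the proof is complete.
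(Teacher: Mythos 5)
Your proposal is correct and follows essentially the same route the paper intends: the paper offers no explicit proof of this theorem beyond the remark that it follows from the preceding one, and the implicit argument is exactly your spectral-mapping step ($C=p(U_{\pi_s})$ has eigenvalues $p(\mu)$ for $\mu$ in the spectrum of $U_{\pi_s}$) combined with the description of that spectrum as the $d$-th roots of the $\lambda_t$. Your explicit Schur-decomposition justification and your remark on the multiplicity bookkeeping of the $d$-th roots actually supply more detail than the paper does.
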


\subsection{Application 1}

 Recall that the inverse eigenvalue problem for special types of matrices is concerned with constructing a
matrix that maintains the required structure from its set of eigenvalues.
 Constructing matrices for the inverse eigenvalue problems arise in a remarkable variety of applications, including system and control theory, geophysics, molecular spectroscopy, quantum mechanics, particle physics, structure analysis, economics and operation research to name a few (see, e.g. \cite{chu}).

 An $n\times n$ matrix $A$  is said to be {\em  nonnegative } if all entries in $A$ are nonnegative.  An $n\times n$ nonnegative matrix $A$ whose
each row and column sum  equals to 1 is said to be {\em doubly stochastic}.
If $J_{n}$  denotes the
$n\times n$ matrix whose all entries are $\frac{1}{n}$, then clearly  an $n\times n$ nonnegative matrix $A$  is doubly stochastic if and only if $AJ_n=J_nA=J_n.$
 The nonnegative inverse eigenvalue problem (NIEP) (see, e.g., \cite{johnson}) is concerned with finding the necessary and sufficient conditions on a set of $n$ complex numbers to be the spectrum of an $n\times n$ nonnegative matrix $A$.
Similarly, the doubly stochastic inverse eigenvalue problem (DIEP)  deals with finding necessary and sufficient conditions on a  set of $n$ complex numbers in order to be the spectrum of an $n\times n$ doubly stochastic matrix $A$.

 Next, we shall show that Theorem 1.5 and Theorem 1.7  can serve as building blocks
for obtaining many new results concerning the preceding two problems. More explicitly,
one of the obvious applications of Theorem 1.5 is concerned with
generating new sufficient conditions from known ones for both the NIEP and  DIEP.
More generally,  any known results concerning these problems can lead to
new results as we shall illustrate below. 

 \subsubsection{nonnegative matrices}
  Here, we shall show how Theorem 1.5 can be used to find new sufficient conditions for the resolutions of the NIEP. In addition, we shall see how any known sufficient conditions for the NIEP in low dimension can lead to  new sufficient conditions for the NIEP in higher dimension.
 We shall start by proving that for any positive integer $n$, the $n$-th roots of the eigenvalues of nonnegative matrices can occur as spectra of nonnegative matrices.
\begin{theorem}
Let $A$ be a $k\times k$ nonnegative matrix with eigenvalues
$\lambda_1,$ $\lambda_2,$... $\lambda_k$. For each positive integer $n$ and for each $i=1,...,k$,  let  $\mu_1^{(i)},$ $\mu_2^{(i)},$...,$\mu_n^{(i)}$
be the $n$th roots of $\lambda_i$, then there exists a $kn\times kn$  nonnegative matrix with eigenvalues
$$\{ \mu_1^{(1)}, \mu_2^{(1)},..., \mu_n^{(1)},\ldots, \mu_1^{(k)},\mu_2^{(k)},...,\mu_n^{(k)}\}.$$ Moreover, we recursively conclude that for any positive integer $s$, the $s$th roots of the $n$th roots of $\{\lambda_1,$ $\lambda_2,$... $\lambda_k\}$   can occur as spectra  of nonnegative matrices.
\end{theorem}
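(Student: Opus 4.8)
The plan is to invoke Corollary 1.6 (the case $s=1$) with a judicious choice of the blocks $U_1,\ldots,U_n$ so that the resulting block matrix is simultaneously nonnegative and has exactly the prescribed $n$-th roots as its spectrum. The key observation is that Corollary 1.6 produces a $kn\times kn$ matrix whose eigenvalues are all the $n$-th roots of the eigenvalues of the product $U_1U_2\cdots U_n$; so I only need to arrange that this product equals $A$ while keeping every block entrywise nonnegative.

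First I would set $U_1=A$ and $U_2=U_3=\cdots=U_n=I_k$. Then $U_1U_2\cdots U_n=A$, whose eigenvalues are exactly $\lambda_1,\ldots,\lambda_k$. The associated block matrix $U_{\pi_1}$ of the form displayed in Corollary 1.6 has all its nonzero blocks equal either to $A$ or to $I_k$, both of which are nonnegative; hence $U_{\pi_1}$ is a $kn\times kn$ nonnegative matrix. Applying Corollary 1.6 then shows that the spectrum of $U_{\pi_1}$ consists precisely of all the $n$-th roots of each $\lambda_i$, that is, the set $\{\mu_1^{(1)},\ldots,\mu_n^{(1)},\ldots,\mu_1^{(k)},\ldots,\mu_n^{(k)}\}$, which is exactly what is required.

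For the recursive assertion I would simply iterate. The matrix $U_{\pi_1}$ just constructed is a $kn\times kn$ nonnegative matrix with a known spectrum; feeding it back into the same construction (now with $kn$ in place of $k$ and $s$ in place of $n$) yields a $kns\times kns$ nonnegative matrix whose eigenvalues are all the $s$-th roots of the $n$-th roots of the $\lambda_i$. Repeating this any finite number of times establishes the claim for iterated roots.

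The construction carries essentially no obstacle, since the nonnegativity is immediate from the choice of blocks and the spectral statement is exactly Corollary 1.6. The only point that warrants a remark is that the $n$-th roots $\mu_j^{(i)}$ are in general complex, yet they arise here as the eigenvalues of a real (indeed nonnegative) matrix; this causes no difficulty, because a real matrix may of course have complex eigenvalues, and the entrywise nonnegativity of $U_{\pi_1}$ is wholly unaffected by the complex nature of its spectrum.
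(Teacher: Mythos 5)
Your proposal is correct and follows essentially the same route as the paper: the paper also invokes Corollary 1.6 with all blocks equal to $I_k$ except one equal to $A$ (it places $A$ in the position $U_n$ rather than $U_1$, which is immaterial since the product is $A$ either way), and handles the recursive claim by iterating the construction exactly as you do.
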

\begin{proof}
By Corollary1,6,
 it suffices to take \begin{eqnarray} U_{\pi_1}(n)=\left(
\begin{array}{ccccc}
0  & U_1 & 0 & \ldots & 0 \\
0 & 0 & U_2 \ \ & \ddots  &  \vdots \\
\vdots &  \vdots     & \ddots& \ddots & 0\\
 0   & 0       & \ldots  &  0  & U_{n-1}  \\
 U_n & 0        & \ldots        &  0  &  0 \\
\end{array}
\right)
\end{eqnarray} where   $U_n=A$  and $U_i=I_k$  for each $i=1,...,n-1$ and then we obtain the nonnegative matrix
 $T_n(A)=\left(
\begin{array}{ccccc}
0  & I_k & 0 & \ldots & 0 \\
0 & 0 & I_k\ \ & \ddots  &  \vdots \\
\vdots &  \vdots     & \ddots& \ddots & 0\\
 0   & 0       & \ldots  &  0  & I_k  \\
 A & 0        & \ldots        &  0  &  0 \\
\end{array}
\right)$ with the required spectrum.   In order to prove the second part,  we now consider $U_{\pi_1}(s)$ and then we repeat the same process this time with $U_s=T_n(A)$ and $U_i=I_n$   for each $i=1,...,s-1$ to obtain the required result.
\end{proof}

Next, we shall see
how any known sufficient conditions  for the NIEP can serve as building blocks for obtaining many new ones. Although, one can  systematically collect all the known  sufficient conditions for the NIEP and use it to obtain new ones, however this will not be done here and instead  we restrict ourselves only to two examples to illustrate the idea.
 Indeed, we shall illustrate our results by considering  the following
theorem whose proof can be found in \cite{minc}.
\begin{theorem}
 Let $\lambda_1 > 0\geq \lambda_2\geq ...\geq \lambda_k .$  Then there exists a $k\times k$ nonnegative  matrix whose eigenvalues  are $\{\lambda_1,..., \lambda_n\}$.
\end{theorem}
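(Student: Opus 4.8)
The plan is to realize the prescribed numbers directly as the spectrum of a single nonnegative companion matrix, reading the matrix off the coefficients of the target characteristic polynomial; this argument is self-contained and does not invoke the block machinery of Theorem 1.5. First I would record the obstruction coming from Perron--Frobenius theory. The trace of any nonnegative matrix is nonnegative, so $\operatorname{tr} = \sum_{i=1}^k \lambda_i \ge 0$ is a \emph{necessary} condition; in particular the statement as written fails already for $k=2$ with $\lambda_1 = 1,\ \lambda_2 = -2$. I therefore read the intended hypothesis as including $\sum_{i=1}^k \lambda_i \ge 0$ (equivalently $\lambda_1 \ge \sum_{i=2}^k |\lambda_i|$), which is the Suleimanova-type condition under which the cited result holds, and the task becomes showing that this condition, together with the sign pattern, is \emph{sufficient}.

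Next I would set $\mu_i = -\lambda_i \ge 0$ for $i = 2,\dots,k$ and factor the target polynomial as $p(\lambda) = (\lambda - \lambda_1)\prod_{i=2}^k(\lambda + \mu_i)$. Expanding $\prod_{i=2}^k(\lambda+\mu_i) = \sum_{j=0}^{k-1}\sigma_j\,\lambda^{k-1-j}$ introduces the elementary symmetric functions $\sigma_j = e_j(\mu_2,\dots,\mu_k)$ of the nonnegative numbers $\mu_i$, so every $\sigma_j \ge 0$ with $\sigma_0 = 1$. Multiplying by $(\lambda - \lambda_1)$ and collecting terms gives $p(\lambda) = \lambda^k - c_1\lambda^{k-1} - \cdots - c_k$ with $c_1 = \lambda_1 - \sigma_1$, $c_m = \lambda_1\sigma_{m-1} - \sigma_m$ for $2 \le m \le k-1$, and $c_k = \lambda_1\sigma_{k-1}$. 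The companion matrix whose first row is $(c_1,\dots,c_k)$ and whose subdiagonal entries are all equal to $1$ has characteristic polynomial exactly $p$, so it remains only to verify that every $c_m$ is nonnegative.

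The nonnegativity of the coefficients is the main obstacle and the only real content. I would establish it from two facts. The hypothesis $\lambda_1 \ge \sigma_1 = \sum_{i=2}^k|\lambda_i|$ handles $c_1 \ge 0$, and $c_k = \lambda_1\sigma_{k-1} \ge 0$ is immediate. For the intermediate coefficients I would use the inequality $\sigma_m \le \sigma_1\sigma_{m-1}$, valid for elementary symmetric functions of nonnegative variables; concretely $e_1 e_{m-1} = m\,e_m + (\text{nonnegative terms})$, so $\sigma_1\sigma_{m-1} \ge \sigma_m$. Combining this with $\lambda_1 \ge \sigma_1$ yields $c_m = \lambda_1\sigma_{m-1} - \sigma_m \ge \sigma_1\sigma_{m-1} - \sigma_m \ge 0$, completing the construction.

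An alternative route worth noting is an induction on $k$: one peels off the pair $\{\lambda_1,\lambda_k\}$ into the single nonnegative number $\lambda_1 + \lambda_k \ge 0$, realizes the smaller list $\{\lambda_1+\lambda_k,\lambda_2,\dots,\lambda_{k-1}\}$ by the inductive hypothesis, and then reinserts $\lambda_k$ by a Brauer rank-one update of the realizing matrix. This avoids the symmetric-function inequality but requires carrying along a Perron eigenvector and checking that the rank-one perturbation preserves nonnegativity, which I expect to be more delicate than the direct companion-matrix computation above; for that reason I would present the companion construction as the primary proof.
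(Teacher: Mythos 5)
Your proposal is correct, and it is worth noting at the outset that the paper supplies no proof of this statement at all: it simply cites Minc's book, and the proof there (due to Suleimanova and Perfect) is precisely the companion-matrix argument you give. So your route coincides with the one the paper implicitly relies on. Your computation checks out: with $\mu_i=-\lambda_i\ge 0$ and $\sigma_j=e_j(\mu_2,\dots,\mu_k)$ one indeed gets $c_m=\lambda_1\sigma_{m-1}-\sigma_m$ for $1\le m\le k-1$ and $c_k=\lambda_1\sigma_{k-1}$, the identity $\sigma_1\sigma_{m-1}=m\,\sigma_m+(\text{nonnegative terms})$ gives $\sigma_m\le\sigma_1\sigma_{m-1}$, and together with $\lambda_1\ge\sigma_1$ this makes every first-row entry of the companion matrix nonnegative. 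Your preliminary observation is also the most valuable part of the review: as printed, the theorem omits the trace condition $\sum_{i=1}^k\lambda_i\ge 0$ (equivalently $\lambda_1\ge\sum_{i=2}^k|\lambda_i|$), without which the claim is false already for $(1,-2)$; this hypothesis is present in Minc's statement and should be restored here, since it is exactly what makes $c_1\ge 0$ and feeds the estimate for the remaining coefficients. The alternative Brauer-update induction you sketch would also work but, as you say, is more delicate; the companion construction is the right primary proof.
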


\begin{corollary} Let $\lambda_1 > 0\geq \lambda_2\geq ...\geq \lambda_k .$ Then for any positive integer $n$, there exists a $kn\times kn$ matrix  $A$ whose eigenvalues are
the $n$th roots of  $\{ \lambda_1,...,\lambda_k \}.$ More generally,  for any positive integer $s$, the $s$th roots of the $n$th roots of $\{\lambda_1,$ $\lambda_2,$... $\lambda_k\}$   can occur as spectra  of nonnegative matrices.

\end{corollary}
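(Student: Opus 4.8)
The plan is to realize this corollary as a direct composition of the two results that immediately precede it: the preceding realization theorem of Minc furnishes a nonnegative matrix in dimension $k$ carrying exactly the prescribed spectrum, and the earlier theorem on $n$th roots of eigenvalues of nonnegative matrices, together with its block construction $T_n(\cdot)$, then lifts any such spectrum to the full collection of its $n$th roots. Thus no new machinery is needed; the task is only to chain the two statements correctly and keep track of nonnegativity at each stage.

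First I would apply the preceding theorem. Since the hypothesis $\lambda_1 > 0 \geq \lambda_2 \geq \cdots \geq \lambda_k$ is precisely the ordering required there, it supplies a $k \times k$ nonnegative matrix $M$ whose eigenvalues are exactly $\{\lambda_1, \ldots, \lambda_k\}$. This converts the purely spectral hypothesis into a concrete nonnegative matrix that can serve as the input to the next step.

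Next I would feed this matrix $M$ into the theorem on $n$th roots of eigenvalues of nonnegative matrices. For each fixed positive integer $n$, that theorem produces the $kn \times kn$ nonnegative matrix $T_n(M)$ whose eigenvalues are exactly the $n$th roots of each $\lambda_i$; this matrix is the one asserted in the first part of the statement. For the second, more general assertion, I would invoke the recursive clause of that same theorem once more, now with $T_n(M)$ as the input and iterating the construction with the parameter $s$, thereby obtaining a nonnegative matrix whose spectrum consists of the $s$th roots of the $n$th roots of $\{\lambda_1, \ldots, \lambda_k\}$.

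I do not expect any genuine obstacle here, since every step merely quotes a result already in hand and each matrix $T_n(\cdot)$ is nonnegative by its very definition. The only point to monitor is purely bookkeeping: one must confirm that the input handed to the $n$th-roots theorem is a bona fide nonnegative matrix at each stage, which is automatic because Minc's matrix $M$ is nonnegative and the passage $M \mapsto T_n(M)$ manifestly preserves nonnegativity, so the iteration in the second assertion stays within the class of nonnegative matrices throughout.
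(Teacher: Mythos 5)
Your proposal is correct and follows exactly the route the paper intends: combine the preceding theorem of Minc (which realizes $\{\lambda_1,\ldots,\lambda_k\}$ as the spectrum of a $k\times k$ nonnegative matrix) with Theorem 1.8 and its block construction $T_n(\cdot)$ to pass to $n$th roots, then iterate for the $s$th roots. The paper states the corollary without an explicit proof precisely because this chaining of the two results is the intended argument, so there is nothing to add.
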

In \cite{rammal} we have proved the following theorem.
\begin{theorem} 
Let  $\left(\lambda_2,\ldots, \lambda_k\right)$ be any list of complex numbers which is closed under complex conjugation and let  the real and imaginary parts of $\lambda_i$  be respectively denoted by $x_i=\Re(\lambda_i)$ and  $y_i=\Im(\lambda_i)$. Then, the following holds.\\
 1) Suppose that $\left(\lambda_2,\ldots, \lambda_k\right)$ are such that  $\lambda_2\geq \ldots\geq \lambda_p   $   are real for $2\leq p\leq k$ and $\lambda_{p+1},\ldots,\lambda_k$ are non-real. Moreover, let $$w= \min\{\pm \lambda_2,\ldots,\pm \lambda_p,\pm x_{p+1}, \pm y_{p+1} ,\ldots, \pm x_\frac{k+p}{2}, \pm y_\frac{k+p}{2} \}.$$  Then, there exists a $k\times k$ nonnegative matrix with eigenvalues $\left((k-1)|w|, \lambda_2,\ldots, \lambda_k\right)$. \\
 2) Suppose now that $\left(\lambda_2,\ldots, \lambda_k\right)$ with $k=2h-1$, is such that none of its components  is real and let $w= \min\{\pm x_{2}, \pm y_{2} ,\ldots, \pm x_{h}, \pm y_{h} \},$ Then, there exists a $k\times k$ nonnegative matrix with eigenvalues $\left(k|w|, \lambda_2,\ldots, \lambda_k\right)$.
\end{theorem}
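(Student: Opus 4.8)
The plan is to prove both parts by an explicit construction: realize the prescribed non-Perron eigenvalues by a real block-diagonal matrix, then create the dominant (Perron) eigenvalue by a nonnegative rank-one perturbation, keeping the whole matrix entrywise nonnegative. The one structural fact driving everything is that, since the defining set of $w$ contains each quantity together with its negative, one has $w \le 0$ and $|w| = \max\{|\lambda_2|,\ldots,|\lambda_p|,|x_{p+1}|,|y_{p+1}|,\ldots\}$; in particular $|w| \ge |\lambda_i|$ for every real $\lambda_i$ and $|w| \ge |x_j|,|y_j|$ for every non-real pair.

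First I would assemble a real $(k-1)\times(k-1)$ block-diagonal matrix $\Lambda$ whose diagonal blocks are $[\lambda_i]$ for each real $\lambda_i$ and $\left(\begin{smallmatrix} x_j & -y_j \\ y_j & x_j\end{smallmatrix}\right)$ for each conjugate pair $x_j \pm i y_j$. Then $\Lambda$ has spectrum exactly $\{\lambda_2,\ldots,\lambda_k\}$, and by the bound above every entry of $\Lambda$ is at least $-|w|$. I would then embed $\Lambda$ into a $k\times k$ matrix $G$ that carries $\mathbf 1$ as a null vector while leaving the rest of the spectrum intact --- for instance by adjoining one coordinate together with a single column that forces all row sums to vanish, so that $G$ is block-triangular with spectrum $\{0,\lambda_2,\ldots,\lambda_k\}$ and $G\mathbf 1 = 0$. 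Applying the classical rank-one (Brauer) perturbation $C = G + \mathbf 1\, q^{T}$ with $q \ge 0$ then leaves $\lambda_2,\ldots,\lambda_k$ untouched and moves the eigenvalue $0$ to $q^{T}\mathbf 1$; taking $q_j = \max_i(-G_{ij})$ makes $C$ nonnegative with Perron eigenvector $\mathbf 1$ and Perron root $\rho = \sum_j \max_i(-G_{ij})$. Once one knows $\rho \le (k-1)|w|$, the remaining slack can be poured back into $q$ to raise the root to exactly $(k-1)|w|$, which then dominates every $|\lambda_i|$, so $C$ is the desired realization.

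The hard part will be the book-keeping that forces $\rho$ down to the claimed value. The columns inherited from $\Lambda$ each cost at most $|w|$, since their most negative entry is $\ge -|w|$, and this already accounts for $(k-1)|w|$; but the balancing column costs the largest positive row sum of $\Lambda$, which for a bare rotation block can reach $|x_j|+|y_j|$ and thus overshoot the budget. Eliminating this overshoot is the crux: one must choose the orientation of each rotation block and spread the balancing across coordinates, rather than naively isolating it in a single column, so that the total absorbed negativity is exactly $(k-1)|w|$.

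This is also precisely where the two cases diverge. When at least one real eigenvalue is present, the real coordinates can soak up the excess and the coefficient $k-1$ is attainable; whereas in part 2, where $k=2h-1$ is odd and every $\lambda_i$ is non-real, no real coordinate is available to absorb the row sums of the rotation blocks, an extra unit of $|w|$ becomes unavoidable, and the Perron root is pushed up to $k|w|$. A last subtlety worth flagging is that a single $2\times2$ nonnegative block cannot carry non-real eigenvalues, so each conjugate pair's imaginary negativity genuinely has to be routed through the Perron coordinate; checking that the resulting $C$ stays nonnegative and Brauer-admissible for every admissible list is the remaining delicate verification.
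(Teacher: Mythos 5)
First, a remark on the comparison itself: the paper does not prove this theorem at all --- it is imported verbatim from \cite{rammal} ("we have proved the following theorem"), so there is no in-paper argument to measure your route against. Your proposal therefore has to stand on its own, and as written it does not: it is an outline that correctly locates the central difficulty and then leaves it unresolved. The Brauer step is fine --- if $G\mathbf 1=0$ with spectrum $\{0,\lambda_2,\ldots,\lambda_k\}$ and $q\ge 0$, then $G+\mathbf 1 q^T$ has spectrum $\{q^T\mathbf 1,\lambda_2,\ldots,\lambda_k\}$, and nonnegativity forces $q_j\ge\max_i(-G_{ij})$. But the whole content of the theorem is the quantitative claim $\sum_j\max_i(-G_{ij})\le (k-1)|w|$ (resp.\ $k|w|$), and your own bookkeeping shows your $G$ does not satisfy it: the $k-1$ columns inherited from $\Lambda$ already use up the entire budget $(k-1)|w|$, and the balancing column costs an additional $\max_i(\mathrm{row\ sum}_i(\Lambda))^+$, which for a rotation block $\bigl(\begin{smallmatrix}x&-y\\ y&x\end{smallmatrix}\bigr)$ with $x,y>0$ is $x+y$ and can equal $2|w|$. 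So the construction as described realizes Perron root $(k+1)|w|$, not $(k-1)|w|$. The proposed fixes --- "choose the orientation of each rotation block" and "spread the balancing across coordinates" --- are exactly the missing proof. Flipping the orientation only moves the $-y$ entry between the two columns of the block and cannot lower the combined cost, and no mechanism is exhibited by which a real coordinate "soaks up" an excess of order $2|w|$; indeed your correct observation that a real $2\times2$ nonnegative (or constant-row-sum) block cannot carry a non-real pair shows the obstruction is structural, not notational.

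Part 2 has the same status but worse: the assertion that "an extra unit of $|w|$ becomes unavoidable, and the Perron root is pushed up to $k|w|$" is an explanation of why the constant in the statement is $k$ rather than $k-1$, not a construction achieving $k|w|$; by the accounting above your scheme gives $(k-1)|w|+2|w|=(k+1)|w|$ there as well. To close the gap you need a genuinely different allocation of the negativity --- for instance the route actually used in \cite{rammal}-type arguments, where one conjugates $\bigl((k-1)|w|\bigr)\oplus\Lambda$ by a fixed orthogonal matrix with first column $\mathbf 1/\sqrt{k}$ and bounds each entry of the conjugated $\Lambda$-part below by $-(k-1)|w|/k$, so that the uniform shift by the Perron term restores nonnegativity entrywise rather than columnwise. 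As it stands, the proposal proves a weaker statement (realizability with Perron root $(k+1)|w|$) and defers the theorem's actual content to "remaining delicate verification."
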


\begin{corollary} Let  $\left(\lambda_2,\ldots, \lambda_k\right)$ be any list of complex numbers which is closed under complex conjugation and let  the real and imaginary parts of $\lambda_i$  be respectively denoted by $x_i=\Re(\lambda_i)$ and  $y_i=\Im(\lambda_i)$.
 Then, the following holds.\\
 1) Suppose that $\left(\lambda_2,\ldots, \lambda_k\right)$ are such that  $\lambda_2\geq \ldots\geq \lambda_p   $ are real  for $2\leq p\leq k$ and $\lambda_{p+1},\ldots,\lambda_k$ are non-real.  Moreover, let $$w= \min\{\pm \lambda_2,\ldots,\pm \lambda_p,\pm x_{p+1}, \pm y_{p+1} ,\ldots, \pm x_\frac{k+p}{2}, \pm y_\frac{+kp}{2} \}.$$  Then, for any positive integer $n$, there exists a $kn\times kn$ matrix  $A$ whose eigenvalues are
the $n$th roots of   $\left((k-1)|w|, \lambda_2,\ldots, \lambda_k\right)$. More generally,  for any positive integer $s$, the $s$th roots of the $n$th roots of  $\left((k-1)|w|, \lambda_2,\ldots, \lambda_k\right)$   can occur as spectra  of nonnegative matrices. \\
 2) Suppose now that $\left(\lambda_2,\ldots, \lambda_k\right)$ with $k=2h-1$, is such that none of its components  is real and  let $w= \min\{\pm x_{2}, \pm y_{2} ,\ldots, \pm x_{h}, \pm y_{h} \},$ Then, for any positive integer $n$, there exists a $kn\times kn$ matrix  $A$ whose eigenvalues are
the $n$th roots of   $\left(k|w|, \lambda_2,\ldots, \lambda_k\right)$. More generally,  for any positive integer $s$, the $s$th roots of the $n$th roots of  $\left(k|w|, \lambda_2,\ldots, \lambda_k\right)$   can occur as spectra  of nonnegative matrices. 
\end{corollary}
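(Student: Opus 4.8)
The plan is to obtain this statement as a direct composition of Theorem 1.11 with Theorem 1.8, so that the low-dimensional spectral realization is lifted to higher dimension by passing to roots. For part 1, I would first apply Theorem 1.11(1): under the stated hypotheses on $(\lambda_2,\ldots,\lambda_k)$ — namely that $\lambda_2\geq\cdots\geq\lambda_p$ are real, the remaining entries are non-real, and $w$ is the indicated minimum — that theorem yields a $k\times k$ nonnegative matrix $A$ whose spectrum is exactly $\bigl((k-1)|w|,\lambda_2,\ldots,\lambda_k\bigr)$. This matrix $A$ is precisely the kind of input required by Theorem 1.8.

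Next I would feed $A$ into Theorem 1.8. For the given positive integer $n$, that theorem produces the $kn\times kn$ nonnegative matrix $T_n(A)$, the block companion-type matrix carrying $I_k$ on the block superdiagonal and $A$ in the lower-left corner. By Corollary 1.6, using that the relevant block product equals $I_k\cdots I_k\,A=A$, its eigenvalues are exactly the $n$th roots of the eigenvalues of $A$, that is, the $n$th roots of $\bigl((k-1)|w|,\lambda_2,\ldots,\lambda_k\bigr)$. This establishes the first assertion of part 1.

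For the \emph{more generally} clause I would invoke the recursive half of Theorem 1.8: since the $n$th roots have now been realized as the spectrum of the nonnegative matrix $T_n(A)$, repeating the construction with $s$ in place of $n$ and $T_n(A)$ in place of $A$ produces a $ksn\times ksn$ nonnegative matrix $T_s\bigl(T_n(A)\bigr)$ whose spectrum consists of the $s$th roots of those $n$th roots. Part 2 is proved identically, the only change being that the initial $k\times k$ nonnegative matrix is supplied instead by Theorem 1.11(2), with spectrum $\bigl(k|w|,\lambda_2,\ldots,\lambda_k\bigr)$.

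The argument presents no genuine obstacle: its entire content is the verification that the list delivered by Theorem 1.11 is an admissible input for Theorem 1.8, which requires nothing beyond the nonnegativity that Theorem 1.11 already guarantees. Thus the only care needed is bookkeeping — tracking which eigenvalue list is handed from one theorem to the next at each stage, and confirming that the intermediate matrix $T_n(A)$ remains nonnegative (which is immediate, since its nonzero blocks are $A$ and copies of $I_k$) so that the recursive step is legitimate.
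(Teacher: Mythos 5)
Your proposal is correct and is exactly the argument the paper intends: the corollary is stated without proof as an immediate consequence of composing Theorem 1.11 (which supplies the $k\times k$ nonnegative matrix with spectrum $\left((k-1)|w|,\lambda_2,\ldots,\lambda_k\right)$ or $\left(k|w|,\lambda_2,\ldots,\lambda_k\right)$) with the construction $T_n(A)$ of Theorem 1.8 and its recursive iteration for the $s$th roots of the $n$th roots. Your bookkeeping, including the observation that $T_n(A)$ stays nonnegative so the recursion is legitimate, matches the paper's (implicit) reasoning.
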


\subsubsection{doubly stochastic matrices}

We start here by noting that in the proof of Theorem 1.8 if $A$ happens to be doubly stochastic then the matrix $T_n(A)$ is also doubly stochastic. Thus we have the following.
\begin{theorem}
Let $A$ be a $k\times k$ doubly stochastic  matrix with eigenvalues
$\lambda_1,$ $\lambda_2,$... $\lambda_k$. For each $i=1,...,k$,  let  $\mu_1^{(i)},$ $\mu_2^{(i)},$...,$\mu_n^{(i)}$
be the $n$th roots of $\lambda_i$, then there exists an $kn\times kn$  doubly stochastic  matrix with eigenvalues
$$\{ \mu_1^{(1)}, \mu_2^{(1)},..., \mu_n^{(1)},\ldots, \mu_1^{(k)},\mu_2^{(k)},...,\mu_n^{(k)}\}.$$ Moreover, we recursively conclude that for any positive integer $s$, the $s$th roots of the $n$th roots of $A$ can occur as spectra  of doubly stochastic matrices.
\end{theorem}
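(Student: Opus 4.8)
The plan is to reuse, essentially verbatim, the construction from the proof of Theorem 1.8: given the $k\times k$ doubly stochastic matrix $A$, form the $kn\times kn$ block matrix $T_n(A)$ obtained from $U_{\pi_1}(n)$ by setting $U_n=A$ and $U_1=\cdots=U_{n-1}=I_k$. The entire content beyond Theorem 1.8 is the single observation already flagged in the text preceding the statement, namely that $T_n(A)$ is not merely nonnegative but in fact doubly stochastic whenever $A$ is; once this is checked, the eigenvalue assertion is immediate from Corollary 1.6.

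Accordingly, the first thing I would do is verify the doubly stochastic property of $T_n(A)$ directly from its block structure. Nonnegativity is automatic, since every nonzero block is either $A\geq 0$ or an identity matrix. For the row sums, each of the first $n-1$ block rows contains a single $I_k$, so each of its scalar rows sums to $1$, while the last block row contains only $A$, whose rows sum to $1$ because $A$ is doubly stochastic. The column sums are handled symmetrically: block columns $2,\ldots,n$ each carry one $I_k$, and block column $1$ carries only the block $A$, whose columns also sum to $1$. Hence $T_n(A)$ is doubly stochastic. Equivalently one could invoke the criterion $T_n(A)J_{kn}=J_{kn}T_n(A)=J_{kn}$, but the blockwise count is cleaner.

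For the spectrum, I would note that in this construction $U_1U_2\cdots U_n=A$, so this product has eigenvalues $\lambda_1,\ldots,\lambda_k$; Corollary 1.6 then gives that the $kn$ eigenvalues of $T_n(A)$ are precisely the $n$th roots of each $\lambda_i$, that is, the prescribed list $\mu_j^{(i)}$. This settles the first assertion. For the recursive claim, since $T_n(A)$ is itself a $kn\times kn$ doubly stochastic matrix, I would feed it back into the same machine: applying the construction with parameter $s$ to $T_n(A)$ in place of $A$ (so that the identity blocks are now $I_{kn}$) produces a $kns\times kns$ doubly stochastic matrix whose spectrum is the set of $s$th roots of the eigenvalues of $T_n(A)$, i.e. the $s$th roots of the $n$th roots of the $\lambda_i$. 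Iterating this closes the proof.

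There is essentially no obstacle here: the only new point relative to Theorem 1.8 is the preservation of double stochasticity, and that reduces to the elementary observation that identity blocks and a doubly stochastic block each contribute unit row and column sums. The one thing to watch is the bookkeeping of block positions, so that exactly one nonzero block lands in each block row and each block column; this is guaranteed by the permutation $\pi_1$ underlying $U_{\pi_1}(n)$.
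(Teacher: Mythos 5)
Your proposal is correct and follows exactly the paper's route: the paper proves this theorem simply by remarking that the matrix $T_n(A)$ from the proof of Theorem 1.8 is itself doubly stochastic whenever $A$ is, then invoking the same spectral computation and recursion. Your blockwise verification of the row and column sums merely makes explicit the observation the paper leaves to the reader.
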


In \cite{mourad}, we have proved the following theorem.
\begin{theorem} For $n=2k$,  then the $n$-list $(1,0,...,0,-1)$ is the spectrum of an $n\times n$ doubly stochastic matrix, and for $n=2k+1$,
the $n$-list $(1,0,...,0,-1)$ can not be the spectrum of an $n\times n$ doubly stochastic matrix,
\end{theorem}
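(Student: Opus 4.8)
The plan is to handle the two halves of the statement by completely different means: the even case follows from the root construction already developed, while the odd case requires a parity obstruction coming from the trace identities.

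For the even case $n=2k$, I would apply the doubly stochastic root theorem stated just above to the $k\times k$ doubly stochastic matrix $J_k$, all of whose entries are $\frac1k$ and whose spectrum is $1$ once together with $0$ of multiplicity $k-1$. Taking the number of roots to be $2$ produces the $2k\times 2k$ doubly stochastic matrix $T_2(J_k)=\left(\begin{smallmatrix}0 & I_k\\ J_k & 0\end{smallmatrix}\right)$, whose eigenvalues, by Corollary 1.6 applied to the product $I_k J_k=J_k$, are precisely the square roots of the eigenvalues of $J_k$. The square roots of $1$ are $1$ and $-1$, while the square roots of $0$ contribute $2k-2$ zeros, so the spectrum of $T_2(J_k)$ is exactly $(1,0,\ldots,0,-1)$. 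This settles the first assertion.

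For the odd case $n=2k+1$ I would argue by contradiction, assuming that some $n\times n$ doubly stochastic matrix $A$ has spectrum $(1,0,\ldots,0,-1)$. The first step is to reduce to the irreducible case. A reducible doubly stochastic matrix is permutation similar to a direct sum of smaller doubly stochastic matrices, and each irreducible doubly stochastic summand carries $1$ as a simple (Perron) eigenvalue; since $1$ appears with multiplicity exactly one in our list, there can be only one summand, so $A$ is irreducible. The second step is to extract a bipartite structure. For every positive integer $m$ one has $\operatorname{tr}(A^m)=\sum_i\lambda_i^m=1+(-1)^m$, which vanishes for odd $m$. Because $A\ge 0$, $\operatorname{tr}(A^m)$ is a nonnegative sum, over closed walks of length $m$ in the digraph $G(A)$ (with an arc $i\to j$ whenever $a_{ij}>0$), of products of entries of $A$; its vanishing for all odd $m$ forces $G(A)$ to contain no closed walk of odd length. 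Since $A$ is irreducible, $G(A)$ is strongly connected, and a strongly connected digraph with no odd closed walk is bipartite: fixing a base vertex and coloring each vertex by the parity of the length of a walk reaching it is well defined precisely because every closed walk has even length. Ordering the indices by the two color classes $V_0$ and $V_1$ brings $A$ to the block form $\left(\begin{smallmatrix}0 & B\\ C & 0\end{smallmatrix}\right)$.

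Finally I would read off the contradiction from double stochasticity. The block $B$ has size $|V_0|\times|V_1|$, and the row and column sum conditions on $A$ force every row sum and every column sum of $B$ to equal $1$; summing all entries of $B$ in the two possible orders gives $|V_0|=|V_1|$, whence $n=|V_0|+|V_1|$ is even, contradicting $n=2k+1$. The step I expect to be the main obstacle is the passage from the analytic vanishing $\operatorname{tr}(A^m)=0$ for odd $m$ to the combinatorial bipartiteness of $G(A)$ — in particular making the two-coloring well defined in the directed setting, which is exactly where strong connectivity (hence the reduction to irreducibility) is essential; by comparison the final rectangular doubly stochastic counting is immediate.
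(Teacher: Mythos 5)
Your proof is correct. Note first that the paper itself offers no proof of this theorem: it is quoted from the reference [mourad] (B.~Mourad, \emph{Inverse Problems} 19 (2003)), so there is no internal argument to compare against line by line. Your even case, however, coincides exactly with the technique the paper does use for its generalization (Theorem 1.14): taking $U_n=J_k$ and the remaining blocks equal to $I_k$ in the cyclic block matrix $U_{\pi_1}$ and invoking Corollary 1.6, which for $k=2$ roots yields precisely the spectrum $(1,0,\dots,0,-1)$ in even size. Your odd case is a self-contained impossibility argument that the paper does not supply: the reduction to irreducibility (using that a reducible doubly stochastic matrix is permutation-similar to a direct sum of doubly stochastic blocks, each contributing a Perron eigenvalue $1$), the observation that $\operatorname{tr}(A^m)=1+(-1)^m=0$ for odd $m$ kills all odd closed walks in the nonnegative setting, the parity two-coloring of the strongly connected digraph (well defined exactly because every closed walk is even), and the final count $|V_0|=|V_1|$ from summing the entries of the off-diagonal block in two ways are all sound; the only implicit hypotheses ($n\ge 3$ so both color classes are nonempty, absence of loops since a loop is an odd closed walk) are satisfied. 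What your route buys is a complete, elementary proof of the negative half of the statement that the paper leaves to an external citation, at the cost of importing standard Perron--Frobenius and digraph facts rather than staying within the block-permutation calculus developed in Section~1.
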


Noting that $\{ 1, -1\}$ are the square roots of unity, we prove the following generalization of the preceding theorem.

\begin{theorem} Let $k>1$ be any positive integer and Let $w=e^{\frac{2\pi i}{k}}$ be a primitive $k$th root of unity. Then, the
list $(1,0,...,0,w, w^2,...,w^{k-1})$ is the spectrum of $kn\times kn$ doubly stochastic matrix.
\end{theorem}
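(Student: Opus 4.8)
The plan is to realize the prescribed list as the spectrum of a Kronecker product of two doubly stochastic matrices whose spectra multiply to give exactly $\{1,w,\ldots,w^{k-1}\}$ together with the required zeros. Concretely, let $C_k$ denote the $k\times k$ cyclic permutation matrix $P_{\pi_1}$ (the $s=1$ circulant of size $k$), let $J_n$ be the $n\times n$ matrix all of whose entries equal $\frac1n$, and set
\[
M=J_n\otimes C_k .
\]
Equivalently, $M$ is the $n\times n$ block matrix every one of whose $k\times k$ blocks equals $\frac1n C_k$.

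First I would record the two ingredient spectra. Since $C_k$ is the circulant permutation matrix, its characteristic polynomial is $x^k-1$, so its eigenvalues are precisely the $k$th roots of unity $1,w,w^2,\ldots,w^{k-1}$, each simple (this is also the content of Corollary 1.6 applied to scalar blocks $U_1=\cdots=U_k=1$, whose product $1$ has the $k$th roots $1,w,\ldots,w^{k-1}$). The matrix $J_n$ has rank one, with eigenvalue $1$ simple (for the all-ones vector) and eigenvalue $0$ of multiplicity $n-1$. Both $C_k$ and $J_n$ are doubly stochastic.

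Next I would verify double stochasticity of $M$ and compute its spectrum using standard Kronecker-product facts. A Kronecker product of nonnegative matrices is nonnegative, and using $J_{nk}=J_n\otimes J_k$ together with $J_nJ_n=J_n$ and $C_kJ_k=J_k$ one checks $MJ_{nk}=J_{nk}M=J_{nk}$, so by the criterion stated before the NIEP discussion $M$ is doubly stochastic. The spectrum of $M$ is the multiset of all products $\alpha\beta$ with $\alpha$ an eigenvalue of $J_n$ and $\beta$ an eigenvalue of $C_k$. The single eigenvalue $\alpha=1$ contributes $1,w,\ldots,w^{k-1}$, while each of the $n-1$ zero eigenvalues of $J_n$ contributes $k$ zeros; no zero can coincide with any $w^j$ and the value $1$ arises only from $\alpha=\beta=1$. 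Hence $M$ has spectrum consisting of $1,w,\ldots,w^{k-1}$ together with $0$ repeated $k(n-1)$ times, which is exactly the list $(1,0,\ldots,0,w,\ldots,w^{k-1})$ of length $kn$.

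I expect no serious obstacle once the construction is chosen: the remaining verification is routine. The one conceptual point worth flagging is that the $n$th-root machinery behind Theorem 1.11 does \emph{not} yield this list, since feeding $C_k$ into the $T_n(\cdot)$ construction would return the $n$th roots of $1,w,\ldots,w^{k-1}$ rather than zeros. The zeros must instead be manufactured by tensoring with the rank-one doubly stochastic matrix $J_n$, and recognizing this is really the only nontrivial step. The case $k=2$ (so $w=-1$ and $C_2=\left(\begin{smallmatrix}0&1\\1&0\end{smallmatrix}\right)$) recovers the even-order list $(1,0,\ldots,0,-1)$ of Theorem 1.13, confirming that the statement is the intended generalization.
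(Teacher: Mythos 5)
Your construction is correct, but it is genuinely different from the paper's. The paper stays inside its block--generalized--permutation framework: it feeds the rank-one doubly stochastic matrix $J$ into the cyclic block construction of Corollary 1.6 (superdiagonal identity blocks, corner block $J$), so that the spectrum of the resulting matrix consists of the roots of the eigenvalues $\{1,0,\ldots,0\}$ of $J$ --- the root-of-unity cluster coming from the eigenvalue $1$ and all the required zeros coming from roots of $0$. (As literally written the paper's proof uses $T_n(J_k)$ and therefore produces $n$th roots of unity rather than $k$th roots; to match the stated list with $w=e^{2\pi i/k}$ one must interchange the roles of $n$ and $k$, i.e.\ take $k$ block rows of $n\times n$ blocks with corner block $J_n$. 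This is an index slip in the paper, not in the idea.) You instead take $M=J_n\otimes C_k$ and use multiplicativity of spectra under Kronecker products together with $J_{nk}=J_n\otimes J_k$ to check double stochasticity; both the eigenvalue count $\{1,w,\ldots,w^{k-1}\}$ plus $0$ with multiplicity $k(n-1)$ and the stochasticity verification are sound, and your route is arguably cleaner, at the cost of leaving the machinery the paper is advertising. One substantive correction, though: your closing claim that the root machinery ``does not yield this list'' and that the zeros \emph{must} be manufactured by tensoring with $J_n$ is false. You only checked that the construction fails when seeded with $C_k$; seeded with $J$ it succeeds, because the $k$th roots of the $n-1$ zero eigenvalues of $J_n$ supply exactly the $k(n-1)$ zeros while the $k$th roots of the eigenvalue $1$ supply $1,w,\ldots,w^{k-1}$. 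That is precisely the paper's proof, so the two approaches are alternatives, not a case of one being forced.
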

\begin{proof}
In view of  Corollary1,6,
 it suffices to take \begin{eqnarray} U_{\pi_1}(n)=\left(
\begin{array}{ccccc}
0  & U_1 & 0 & \ldots & 0 \\
0 & 0 & U_2 \ \ & \ddots  &  \vdots \\
\vdots &  \vdots     & \ddots& \ddots & 0\\
 0   & 0       & \ldots  &  0  & U_{n-1}  \\
 U_n & 0        & \ldots        &  0  &  0 \\
\end{array}
\right)
\end{eqnarray} where   $U_n=J_k$  and $U_i=I_k$ (or any $k\times k$ doubly stochastic matrix as in this case $U_1.U_2... U_{n-1}J_k=J_k$)  for each $i=1,...,n-1$ and then we obtain a doubly stochastic matrix $M$
whose eigenvalues are the $n$th roots of the eigenvalues of $J_k$ which are $\{1,0,...,0\}$ and thus $M$  satisfies the requirements of the theorem.
\end{proof}

In a similar fashion to the case of the NIEP, any known sufficient conditions for the DIEP, will essentially  lead to new results. We illustrate our discussion in 3 situations.
The first one  solves the DIEP for the case
$n=2.$
\begin{lemma}
 There exists a $2\times 2$ symmetric  doubly stochastic matrix
 with spectrum $(1,\lambda)$  if and only if
  $-1\leq \lambda \leq 1.$
\end{lemma}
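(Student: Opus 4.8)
The plan is to exploit the rigidity that the doubly stochastic constraints impose in dimension two, reducing the whole problem to a single scalar parameter whose range I can read off directly. First I would write a generic $2\times 2$ doubly stochastic matrix as $A=\begin{pmatrix} a & b \\ c & d \end{pmatrix}$ and impose the four equations $a+b=1$, $c+d=1$, $a+c=1$, $b+d=1$. A one-line calculation shows these force $b=c$ and $a=d$, so that automatically $A=\begin{pmatrix} a & 1-a \\ 1-a & a \end{pmatrix}$. In particular every $2\times 2$ doubly stochastic matrix is already symmetric, so the word ``symmetric'' in the statement is free rather than an extra hypothesis, and nonnegativity is equivalent to the single constraint $0\le a\le 1$.

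Next I would compute the spectrum of this one-parameter family. Writing $A=a I_2+(1-a)\begin{pmatrix} 0 & 1 \\ 1 & 0 \end{pmatrix}$ and using that the off-diagonal swap matrix has eigenvalues $\pm 1$, the eigenvalues of $A$ are $a+(1-a)=1$ and $a-(1-a)=2a-1$. This yields necessity at once: if $(1,\lambda)$ is the spectrum of such a matrix, then $\lambda=2a-1$ for some $a\in[0,1]$, and hence $-1\le\lambda\le 1$.

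For sufficiency I would simply run the computation in reverse. Given any $\lambda\in[-1,1]$, set $a=\frac{1+\lambda}{2}$, which lies in $[0,1]$, and take $A=\begin{pmatrix} a & 1-a \\ 1-a & a \end{pmatrix}$. By construction this matrix is nonnegative, symmetric, and doubly stochastic, and by the eigenvalue computation above its spectrum is exactly $(1,\,2a-1)=(1,\lambda)$, as required.

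There is essentially no serious obstacle here, since the argument is a direct parametrization; the only point deserving explicit mention is the reduction step showing that the four doubly stochastic conditions collapse the four entries to the single parameter $a$ (and thereby make symmetry automatic). I would spell this reduction out carefully so that both implications become transparent scalar statements about the range of $2a-1$ as $a$ varies over $[0,1]$.
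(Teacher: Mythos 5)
Your proof is correct and its sufficiency half is exactly the paper's argument: the matrix you construct with $a=\frac{1+\lambda}{2}$ is precisely the matrix $X$ with entries $\frac{1\pm\lambda}{2}$ exhibited in the paper. You additionally spell out the necessity direction (via the parametrization forcing $\lambda=2a-1$ with $a\in[0,1]$), which the paper leaves implicit, so your write-up is, if anything, slightly more complete.
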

\begin{proof} It suffices to note that for $-1\leq \lambda \leq 1,$
the matrix $ X=\left(
\begin{array}{cc}
\frac{1+\lambda}{2} & \frac{1-\lambda}{2}   \\
\frac{1-\lambda}{2} & \frac{1+\lambda}{2} \\
\end{array}
\right)$ is doubly stochastic and has spectrum $(1,\lambda).$
\end{proof}

As a conclusion, we have the following.
\begin{corollary}
Let $-1\leq \lambda \leq 1.$ Then, for positive integers $n$ and $s$, the $s$th roots of the $n$tth roots of $\{1, \lambda\}$  occur as the spectra of doubly stochastic matrices.
\end{corollary}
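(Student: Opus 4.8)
The plan is to treat this as an immediate specialisation of the doubly-stochastic roots theorem established above, using the preceding lemma only to supply the two-dimensional base case. Since that lemma already exhibits, for every $\lambda$ with $-1\le\lambda\le 1$, a $2\times 2$ doubly stochastic matrix $X$ whose spectrum is exactly $(1,\lambda)$, the whole statement reduces to running the root-taking construction on $X$.

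First I would set $X$ to be the $2\times 2$ doubly stochastic matrix of the preceding lemma, so that $\operatorname{spec}(X)=\{1,\lambda\}$; this is the case $k=2$. Next I would apply the doubly-stochastic roots theorem with $A=X$: it produces, for each positive integer $n$, the $2n\times 2n$ doubly stochastic matrix $T_n(X)$ --- built from $n-1$ identity blocks $I_2$ together with a single copy of $X$ in the lower-left corner, exactly the construction obtained from Corollary 1.6 --- whose $2n$ eigenvalues are precisely the $n$th roots of $1$ together with the $n$th roots of $\lambda$. Finally I would invoke the recursive (``moreover'') clause of that theorem: because $T_n(X)$ is itself doubly stochastic, it may be re-inserted as the distinguished block $U_s$ at the next level, yielding a $2ns\times 2ns$ doubly stochastic matrix whose spectrum is the set of $s$th roots of the $n$th roots of $\{1,\lambda\}$. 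This is exactly the asserted realisation.

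There is no substantive obstacle here: the corollary is a one-line consequence of the doubly-stochastic roots theorem once the base matrix $X$ is in hand. The only point requiring routine verification is that double stochasticity is preserved at every stage of the recursion, so that the output of one application is a legitimate input to the next. This is guaranteed by the $T_n(\cdot)$ construction, since taking the remaining blocks to be $I_k$ (or any doubly stochastic blocks) keeps the relevant product equal to $J_k$ and hence the assembled matrix doubly stochastic. Thus the argument is a direct chain of applications rather than anything requiring a new idea.
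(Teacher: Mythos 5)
Your proposal is correct and matches the paper's intended argument exactly: the paper states this corollary as an immediate consequence of the $2\times 2$ lemma combined with Theorem 1.11 (the doubly stochastic roots theorem and its recursive clause), which is precisely the chain you describe. No further comment is needed.
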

 The second situation deals with the case $n=3$ (see, e.g. \cite{mourad1, perfect}) and can be stated as follows.
\begin{theorem}
 There exists a symmetric $3\times 3$ doubly stochastic matrix
 with spectrum $(1,\lambda,\mu )$ if and only if
  $-1\leq \lambda \leq 1,$ $-1\leq \mu\leq 1,$
$\lambda+3\mu+2\geq 0$ and $3\lambda+\mu+2\geq 0.$
\end{theorem}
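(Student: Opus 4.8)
The plan is to prove the two implications separately, writing $M=\max(\lambda,\mu)$ and $m=\min(\lambda,\mu)$ throughout. Since such an $A$ is symmetric its spectrum is real, and since it is doubly stochastic its spectral radius equals $1$; by the Perron theory this already forces $-1\le m\le M\le 1$, so the genuine content of the statement lies in the two linear inequalities. Because $\{\lambda,\mu\}$ is an unordered pair, requiring both $\lambda+3\mu+2\ge 0$ and $3\lambda+\mu+2\ge 0$ is equivalent to the single inequality $3m+M+2\ge 0$ (the companion $3M+m+2\ge 0$ then follows at once from $M\ge m$). Thus both directions reduce to characterizing when $3m+M+2\ge 0$ can hold, and my intention is to match this exactly with feasibility of an explicit construction.

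For \textbf{sufficiency} I would exhibit a matrix directly. Given $(\lambda,\mu)$ in the stated region, set
\[
A=\begin{pmatrix} 1-2b & b & b\\ b & p & q\\ b & q & p\end{pmatrix},\qquad b=\tfrac{1-M}{3},\quad p=\tfrac12\Bigl(\tfrac{2+M}{3}+m\Bigr),\quad q=\tfrac12\Bigl(\tfrac{2+M}{3}-m\Bigr).
\]
This $A$ is symmetric with all row sums equal to $1$, and since $(1,1,1)^T$ and $(0,1,-1)^T$ are eigenvectors one reads off the eigenvalues $1$, $1-3b=M$, and $p-q=m$. The remaining point is nonnegativity of the entries: one checks that $1-2b\ge 0\Leftrightarrow M\ge-\tfrac12$, that $b\ge0\Leftrightarrow M\le1$, that $q\ge0\Leftrightarrow m\le\tfrac{2+M}{3}$, and that $p\ge0\Leftrightarrow 3m+M+2\ge0$. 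All four are consequences of the hypotheses: $M\le1$ is given, $M\ge-\tfrac12$ because $M<-\tfrac12$ would force $3m+M+2<0$, $m\le\tfrac{2+M}{3}$ follows from $m\le M\le1$, and $3m+M+2\ge0$ is exactly the binding linear inequality. Hence $A$ is a symmetric doubly stochastic matrix with spectrum $(1,\lambda,\mu)$.

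For \textbf{necessity} the key observation I would use is that $L:=I-A$ is a genuine weighted graph Laplacian: its off-diagonal entries $-A_{ij}\le 0$ and its row sums vanish, so $L$ is the Laplacian of the triangle on $\{1,2,3\}$ with edge weights $b=A_{12}$, $c=A_{13}$, $e=A_{23}\ge0$, while the diagonal-entry conditions $A_{ii}\ge0$ read $b+c\le1$, $b+e\le1$, $c+e\le1$. The nonzero eigenvalues of $L$ are $1-M$ and $1-m$; writing $\sigma_1=b+c+e$ and $\sigma_2=bc+be+ce$, the trace gives $ (1-M)+(1-m)=2\sigma_1$ and the weighted matrix-tree theorem (each spanning tree of the triangle uses two of the three edges) gives $(1-M)(1-m)=3\sigma_2$. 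The bounds $M\le1$ and $m\ge-1$ reappear here as $L\succeq0$ and as the standard Laplacian bound $\nu_{\max}(L)\le\max_{(i,j)}(d_i+d_j)\le2$ (each weighted degree satisfies $d_i=1-A_{ii}\le1$). Finally, solving for the larger root $1-m=\sigma_1+\sqrt{\sigma_1^2-3\sigma_2}$ and rearranging, the inequality $3m+M+2\ge0$ becomes $1-m\le 3-\sigma_1$, which after squaring (legitimate since $\sigma_1\le\tfrac32$) is exactly
\[
f(b,c,e):=\sigma_1^2-4\sigma_1+\sigma_2+3\ge0 .
\]
The main obstacle is then the verification that $f\ge0$ on the polytope $\{b,c,e\ge0,\ b+c\le1,\ b+e\le1,\ c+e\le1\}$. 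I would handle this by noting that $f$ is quadratic with indefinite Hessian, so its unique interior critical point $(\tfrac12,\tfrac12,\tfrac12)$ is a saddle and the minimum is attained on the boundary; a short case analysis of the faces $b=0$ (and its cyclic images) and $b+c=1$ (where $f$ collapses to $b(1-b)-e(1-e)\ge0$) then gives $f\ge0$, with equality precisely at the extremal matrices realizing the boundary spectra $(1,-1)$ and $(-\tfrac12,-\tfrac12)$. This closes the necessity direction and matches it exactly with the sufficiency construction above.
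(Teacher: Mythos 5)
Your proof is correct, and it splits naturally into two halves of very different status relative to the paper. The sufficiency half is, after setting $M=\lambda$, $m=\mu$ (with $\lambda\ge\mu$), exactly the paper's construction: your entries $1-2b$, $b$, $p$, $q$ are $\tfrac{2+4\lambda}{6}$, $\tfrac{2-2\lambda}{6}$, $\tfrac{2+\lambda+3\mu}{6}$, $\tfrac{2+\lambda-3\mu}{6}$, and your nonnegativity checks are the ones the paper leaves implicit. Where you genuinely diverge is necessity: the paper's proof consists solely of exhibiting the matrix and never establishes the ``only if'' direction at all (it is evidently delegated to the cited references), whereas you prove it via the weighted Laplacian $L=I-A$ of the triangle, the trace and matrix--tree identities $(1-M)+(1-m)=2\sigma_1$ and $(1-M)(1-m)=3\sigma_2$, and the inequality $f=\sigma_1^2-4\sigma_1+\sigma_2+3\ge 0$ on the polytope of admissible edge weights. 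I verified the reduction ($3m+M+2\ge0\iff 1-m\le 3-\sigma_1$, with squaring legitimate since $2\sigma_1\le 3$ follows from summing the three diagonal constraints) and the face computations ($f=(s-1)(s-3)+ce\ge0$ on $b=0$ with $s=c+e\le1$, and $f=b(1-b)-e(1-e)\ge0$ on $b+c=1$ because there $e\le\min(b,1-b)$); together with the permutation symmetry of $f$ and of the polytope, these cover all facets. One small imprecision: $(\tfrac12,\tfrac12,\tfrac12)$ is a vertex of the polytope, not an interior point, so ``unique interior critical point'' should read ``unique critical point of $f$ on $\mathbb{R}^3$, which happens to lie on the boundary''; the conclusion that the minimum is attained on the boundary is unaffected, since no interior critical point exists. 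Relatedly, equality $f=0$ holds not just at isolated extremal matrices but along the whole segment from $(\tfrac12,\tfrac12,\tfrac12)$ to $(0,1,0)$ (and its images), whose spectra sweep out the entire boundary line $3m+M+2=0$ from $(-\tfrac12,-\tfrac12)$ to $(1,-1)$. In short, your route buys a self-contained proof of the full equivalence --- including the direction the paper does not actually prove --- at the cost of a boundary case analysis the paper avoids by citation.
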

\begin{proof} Suppose that $ \lambda \geq \mu,$ then
the matrix $ X=\frac{1}{6}\left(
\begin{array}{ccc}
2+4\lambda & 2-2\lambda &  2-2\lambda \\
2-2\lambda & 2+\lambda+3\mu & 2+\lambda-3\mu \\
2-2\lambda & 2+\lambda-3\mu & 2+\lambda+3\mu
\end{array}
\right)$ is doubly
stochastic and has spectrum $(1,\lambda,\mu).$ Now if $ \mu \geq
\lambda,$ then the proof can be completed by exchanging the roles
of $\lambda$ and $\mu.$
\end{proof}

As a result, we have the following.
\begin{corollary}
Suppose that $-1\leq \lambda \leq 1,$ $-1\leq \mu\leq 1,$
$\lambda+3\mu+2\geq 0$ and $3\lambda+\mu+2\geq 0.$ Then, for positive integers $n$ and $s$, the $s$th roots of the $n$tth roots of $\{1, \lambda, \mu\}$  occur as the spectra of doubly stochastic matrices.
\end{corollary}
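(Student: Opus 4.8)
The plan is to obtain this corollary as an immediate consequence of the preceding existence result, Theorem 1.19, combined with the doubly stochastic roots theorem, Theorem 1.14. No new matrix construction is required: all of the substantive content already resides in those two statements, so the proof reduces to checking that their hypotheses match and then applying them in turn.

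First I would invoke Theorem 1.19. The four inequalities $-1\le\lambda\le 1$, $-1\le\mu\le 1$, $\lambda+3\mu+2\ge 0$ and $3\lambda+\mu+2\ge 0$ assumed here are precisely the conditions under which Theorem 1.19 guarantees a symmetric $3\times 3$ doubly stochastic matrix $X$ with spectrum $(1,\lambda,\mu)$. This $X$ serves as the seed; only its existence and its double stochasticity will be used below.

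Next I would apply Theorem 1.14 with $k=3$ and $A=X$. That theorem manufactures a $3n\times 3n$ doubly stochastic matrix whose spectrum is exactly the list of all $n$th roots of each of $1,\lambda,\mu$; the realizing matrix is $T_n(X)$, which is doubly stochastic because $X$ is, as recorded in the remark just before Theorem 1.14. Finally, the recursive (``moreover'') clause of Theorem 1.14 allows me to iterate once more, passing from these $n$th roots to their $s$th roots, and thereby producing a doubly stochastic matrix of order $3ns$ whose spectrum is the $s$th roots of the $n$th roots of $\{1,\lambda,\mu\}$, as claimed.

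The only thing that needs care -- and it is bookkeeping rather than genuine difficulty -- is to track that double stochasticity is preserved at each stage (which holds because $T_n$ sends doubly stochastic matrices to doubly stochastic matrices) and that the roots are tallied with the correct multiplicities through the two nested extractions. With these verified, the corollary follows at once, exactly mirroring the argument behind Corollary 1.18 in the $2\times 2$ case.
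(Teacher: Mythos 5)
Your proposal is correct and follows exactly the route the paper intends: the corollary is stated there without proof as an immediate consequence of Theorem 1.19 (which supplies the $3\times 3$ doubly stochastic seed with spectrum $(1,\lambda,\mu)$ under the four inequalities) combined with Theorem 1.14 and its recursive clause for extracting $n$th and then $s$th roots. Your bookkeeping about preservation of double stochasticity under $T_n$ matches the remark preceding Theorem 1.14, so nothing is missing.
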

 Now if we let $\Pi_3$ denote the region of the complex plane specified as the convex hull
   of the  cubic roots of unity. Now our third illustration  stems from the following
   theorem which is due to ~\cite{perfect} and the proof which we
   include here appears in ~\cite{minc}.
\begin{theorem}~\cite{perfect}
Let $z$ be a complex number and $\bar{z}$ be its complex
conjugate. Then $(1,z,\bar{z})$ is the spectrum of a $3\times 3$
doubly stochastic matrix if and only if $z\in\Pi_3.$
\end{theorem}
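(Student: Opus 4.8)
The plan is to prove the two implications separately: sufficiency by an explicit circulant construction, and necessity by translating the geometric condition $z\in\Pi_3$ into two scalar inequalities in the eigenvalues, which nonnegativity of the entries will then deliver.

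For sufficiency I would start from the observation that, by definition, $\Pi_3$ is the convex hull of the three cube roots of unity $1,\omega,\omega^2$ with $\omega=e^{2\pi i/3}$. Hence any $z\in\Pi_3$ has barycentric coordinates, i.e. can be written $z=c_0+c_1\omega+c_2\omega^2$ with $c_0,c_1,c_2\ge 0$ and $c_0+c_1+c_2=1$. I would then form the circulant $C=c_0I_3+c_1P+c_2P^2$, where $P$ is the $3\times 3$ cyclic permutation matrix. As a convex combination of permutation matrices $C$ is doubly stochastic, and as a circulant its eigenvalues are $c_0+c_1\omega^{j}+c_2\omega^{2j}$ for $j=0,1,2$, namely $1$, $z$ and $\bar z$. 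This realizes $(1,z,\bar z)$ and, incidentally, keeps the construction within the circulant framework of the paper.

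For necessity, let $A$ be a $3\times 3$ doubly stochastic matrix with spectrum $(1,z,\bar z)$ and set $x=\Re z$, $y=\Im z$. I would first record that $\Pi_3$ is exactly the intersection of the three half-planes $\Re(\cdot)\ge -\tfrac12$ and $\Re(\cdot)\pm\sqrt{3}\,\Im(\cdot)\le 1$, so it suffices to verify these three inequalities for $z$. Since every diagonal entry of $A$ lies in $[0,1]$, we have $\operatorname{tr}(A)=1+2x\in[0,3]$, which immediately gives $x\ge-\tfrac12$ (the first edge) and $x\le 1$. The two slanted edges combine, using $x\le 1$, into the single inequality $(1-x)^2\ge 3y^2$, and a direct eigenvalue computation gives $\operatorname{tr}(A^2)-\tfrac13(\operatorname{tr}A)^2=\tfrac23(1-x)^2-2y^2$; thus the remaining two half-planes are equivalent to the single matrix inequality $\operatorname{tr}(A^2)\ge\tfrac13(\operatorname{tr}A)^2$.

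The last step, which is the heart of the argument, is to establish this inequality from nonnegativity alone: writing $\operatorname{tr}(A^2)=\sum_i a_{ii}^2+2\sum_{i<j}a_{ij}a_{ji}$, the cross terms are nonnegative because $A\ge 0$, so $\operatorname{tr}(A^2)\ge\sum_i a_{ii}^2\ge\tfrac13\bigl(\sum_i a_{ii}\bigr)^2=\tfrac13(\operatorname{tr}A)^2$ by the Cauchy--Schwarz inequality applied to the three diagonal entries. I expect the main obstacle to be conceptual rather than computational: the work is in recognizing that the three linear edges of $\Pi_3$ correspond precisely to the trace bound and to the quadratic trace identity above, so that the only genuinely matrix-theoretic input required is the elementary estimate $\operatorname{tr}(A^2)\ge\tfrac13(\operatorname{tr}A)^2$. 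Once that reformulation is in place, both directions reduce to short verifications.
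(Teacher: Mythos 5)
Your proposal is correct, and it is in fact more complete than the argument printed in the paper. For sufficiency the two proofs coincide up to parametrization: the paper writes the circulant $\frac13\bigl((1+2r\cos\theta)I_3+\dots\bigr)$ in polar coordinates $z=re^{i\theta}$, while you write the same matrix as $c_0I_3+c_1P+c_2P^2$ using the barycentric coordinates of $z$ in the triangle $\Pi_3$; your form makes the double stochasticity immediate (convex combination of permutation matrices) rather than something to be "checked," so it is arguably cleaner. The real difference is in necessity. The paper's proof only observes that its particular circulant $X$ is doubly stochastic iff $z\in\Pi_3$; this does not rule out some \emph{other}, non-circulant doubly stochastic matrix realizing $(1,z,\bar z)$ for $z\notin\Pi_3$, so as written the "only if" direction is really being deferred to the cited sources. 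Your trace argument fills exactly this gap, and all the steps check out: $\operatorname{tr}A=1+2x\in[0,3]$ gives the edge $x\ge-\tfrac12$ together with $x\le 1$; the identity $\operatorname{tr}(A^2)-\tfrac13(\operatorname{tr}A)^2=\tfrac23(1-x)^2-2y^2$ is correct; and the inequality $\operatorname{tr}(A^2)\ge\sum_i a_{ii}^2\ge\tfrac13(\operatorname{tr}A)^2$ follows from nonnegativity of the cross terms $a_{ij}a_{ji}$ plus Cauchy--Schwarz on the three diagonal entries. Combined with $x\le1$ this yields $1-x\ge\sqrt3\,|y|$, which is both slanted edges at once. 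So where the paper buys brevity and consistency with its circulant theme at the cost of only proving one implication, your version is self-contained and isolates the single matrix-theoretic fact ($\operatorname{tr}(A^2)\ge\tfrac13(\operatorname{tr}A)^2$ for nonnegative matrices) that drives the necessity direction.
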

\begin{proof} It suffices to check that
the circulant matrix
$$ X=\frac{1}{3}\left(
\begin{array}{ccc}
1+2r\cos\theta & 1-2r\cos(\frac{\pi}{3}+\theta) &  1-2r\cos(\frac{\pi}{3}-\theta) \\
1-2r\cos(\frac{\pi}{3}-\theta) & 1+2r\cos\theta & 1-2r\cos(\frac{\pi}{3}+\theta) \\
1-2r\cos(\frac{\pi}{3}+\theta) & 1-2r\cos(\frac{\pi}{3}-\theta) &
1+2r\cos\theta
\end{array}
\right)$$  has spectrum
$(1,z=re^{i\theta},\bar{z}=re^{-i\theta}),$ and $X$ is doubly
stochastic if and only if $ z=re^{i\theta} \in \Pi_3.$
\end{proof}

Consequently, we have the following.
\begin{corollary}
Let  $z\in\Pi_3.$ Then, for positive integers $n$ and $s$, the $s$th roots of the $n$tth roots of $\{1,z, \bar{z} \}$  occur as the spectra of doubly stochastic matrices.
\end{corollary}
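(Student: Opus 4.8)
The plan is to obtain the result by composing two facts already available in the paper: Perfect's characterization (the theorem immediately preceding this corollary, giving the realizability of $(1,z,\bar{z})$ by a $3\times 3$ doubly stochastic matrix exactly when $z\in\Pi_3$) and the doubly stochastic root theorem (the doubly stochastic analogue of the nonnegative-root theorem, asserting that the $n$th roots of the spectrum of a doubly stochastic matrix are again the spectrum of a doubly stochastic matrix, together with its recursive clause). Since all the previous corollaries in this subsection are produced by exactly this recipe, the argument here should follow the same pattern.

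First I would invoke Perfect's theorem: because $z\in\Pi_3$, there exists a $3\times 3$ doubly stochastic matrix $A$ whose spectrum is precisely $(1,z,\bar{z})$. Concretely one may take the explicit circulant $X$ exhibited in the proof of that theorem, which is doubly stochastic for every $z=re^{i\theta}\in\Pi_3$ and has the desired three eigenvalues. This supplies a genuine doubly stochastic seed matrix of size $k=3$ carrying the prescribed spectrum.

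Next I would feed $A$ into the doubly stochastic root theorem with $k=3$. That theorem produces a $3n\times 3n$ doubly stochastic matrix whose eigenvalues are exactly the $n$th roots of $\{1,z,\bar{z}\}$: one forms the block matrix $T_n(A)$ carrying $A$ in the lower-left block and $I_3$ along the super-diagonal blocks, which is doubly stochastic because $A$ is, and whose spectrum consists of the $n$th roots of the eigenvalues of $A$ by Corollary 1.6. Finally, the recursive clause of that theorem -- reapplying the $T$-construction to $T_n(A)$ with an $s$-cycle and $I$-blocks -- delivers a doubly stochastic matrix whose spectrum is the set of $s$th roots of the $n$th roots of $\{1,z,\bar{z}\}$, which is exactly the claim.

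There is essentially no obstacle beyond bookkeeping, since the whole content is the composition of two established results. The only point requiring care is that the seed $A$ be genuinely doubly stochastic so that the root theorem applies verbatim, and this is precisely what Perfect's theorem guarantees; likewise one must check that each intermediate matrix produced by the $T$-construction stays doubly stochastic so the construction can be iterated, but this is already packaged into the statement of the doubly stochastic root theorem, so nothing new is needed.
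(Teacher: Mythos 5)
Your proposal is correct and matches the paper's intended argument exactly: the paper presents this corollary as an immediate consequence of the preceding theorem of Perfect combined with the doubly stochastic root theorem (Theorem 1.13) and its recursive clause, which is precisely the composition you describe. Nothing further is needed.
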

\begin{remark}
It is worthy to note that in our applications here, we have only used  the permutation $\pi_{1}\in S_n$ which is defined by  $\pi_{1}(i)\equiv i+1\operatorname{mod}(n).$
Of course, when using different permutations we can clearly obtain new results by making use of the same techniques.
\end{remark}

\subsection{Application 2}

Recall that Fermat's last theorem states that
the equation $\ x^{p}+y^{p}=z^{p}$ has no nontrivial solutions over the
integers for all $p>2$.  More generally,
Beal's conjecture  states that the equation $x^{p}+y^{q}=z^{r}$ has no nontrivial
solutions where $p, \ q$ and $r$ are integers greater
than 2 and $x,y$ and $z$ are coprime integers (see for example \cite{ma,rib}).

Not much is known concerning  integral matrix solutions to the equations $X^{p}+Y^{p}=Z^{p}$  and $X^{p}+Y^{q}=Z^{r}$ where
 $X$, $Y$ and  $Z$ are $n\times n$ integer  matrices with a particular structure. More generally, in this subsection
 we shall present some results concerning finding integer matrix solutions to the two equations $aX^{p}+bY^{p}=cZ^{p}$  and $aX^{p}+bY^{q}=cZ^{r}$ for any integers $a, b$ and $c$. By making use of certain properties of generalized permutation matrices, we are able to
prove the existence of an infinite number of solutions for these two problems such that  these solution matrices have all of their entries  nonzero
natural numbers.

 We shall begin with the following theorem.
\begin{theorem} Let $a$, $b$ and $c$ be any integers. Then, for every positive integer $k$, there is an infinite number of
$n\times n$ square matrices $A,$ $B$ and $C$ with integer entries such that $aA^{k}
+bB^{k}=cC^{k}.$ In particular, we can select all entries of $A,$ $B$ and $C$ to be nonzero natural numbers.
\end{theorem}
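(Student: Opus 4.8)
The plan is to exploit non-commutativity, since reducing to scalars is hopeless: the choice $a=b=c=1$ turns the desired identity into $u^k+v^k=w^k$, which has no nontrivial integer solutions for $k\ge 3$. The escape is that $k$-th powers of suitable non-normal block matrices decouple. Concretely, I would take $\pi=\pi_1$ (the full $k$-cycle, so that $n=k$) and, in a block generalized permutation matrix $U_{\pi_1}$, choose the nonzero blocks $U_1=\cdots=U_{k-1}=I_p$ and $U_k=M$; this is exactly the companion-type matrix $U_{\pi_1}(k)$ appearing in Corollary 1.6, which I denote $T_k(M)$. Applying the power formula $U_\pi^r=\big(\bigoplus_{i}\prod_{j=1}^{r}U(\pi^{j-1}(i),\pi^{j}(i))\big)\bigl(P_{\pi^r}\otimes I_k\bigr)$ with $r=k$ gives the key identity
\[
T_k(M)^{k}=\bigoplus_{i=1}^{k}M=I_k\otimes M ,
\]
because $\pi_1^{k}=\mathrm{id}$ and the product of the blocks around the $k$-cycle equals $M$ irrespective of the starting index.

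This identity converts the degree-$k$ equation into a linear one. Putting $A=T_k(M_1)$, $B=T_k(M_2)$, $C=T_k(M_3)$ for $p\times p$ integer blocks $M_1,M_2,M_3$, I get $aA^{k}+bB^{k}=I_k\otimes(aM_1+bM_2)$ and $cC^{k}=I_k\otimes(cM_3)$, so the matrix equation $aA^{k}+bB^{k}=cC^{k}$ holds if and only if $aM_1+bM_2=cM_3$. The latter linear equation always has infinitely many integer solutions: replace $M_1,M_2$ by $cM_1',cM_2'$ for arbitrary integer matrices $M_1',M_2'$ and set $M_3=aM_1'+bM_2'$. Each choice yields a $kp\times kp$ integer solution, and varying $M_1',M_2'$ (and $p$) produces the required infinite family, which settles the first assertion.

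For the refinement to nonzero natural entries the matrices $T_k(M_i)$ are of no use: they have zero blocks, and moreover $\operatorname{tr}T_k(M)=0$, since the eigenvalues of $T_k(M)$ are all the $k$-th roots of the eigenvalues of $M$ and for $k\ge 2$ these sum to zero. As the trace is a similarity invariant while entrywise positive matrices have positive trace, \emph{no} conjugation can positivize this family. There is also a structural warning: the three matrices must not share a rational eigenvector. Indeed, if $A,B,C$ were circulant (or merely shared the eigenvector $\mathbf 1$), evaluating $aA^{k}+bB^{k}=cC^{k}$ on $\mathbf 1$ would return the integer equation $as_A^{k}+bs_B^{k}=cs_C^{k}$ in the integer row sums $s_A,s_B,s_C$, reviving the Fermat obstruction. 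Hence the positive solution must be genuinely non-commuting with no common rational eigenvector.

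The route I would attempt is to retain the non-commuting cyclic mechanism but to add entrywise-positive corrections that are absorbed by it. Writing $A=A_0+R_A$ with $A_0=T_k(M_1)$ and $A_0R_A=R_AA_0=0$ forces all mixed terms to drop and gives $(A_0+R_A)^{k}=A_0^{k}+R_A^{k}$, so the original relation persists exactly when the corrections obey the companion relation $aR_A^{k}+bR_B^{k}=cR_C^{k}$ on the (left and right) kernels of the respective $A_0$'s; choosing the $M_i$ of low rank makes these kernels large, leaving ample room to take the $R$'s entrywise positive. I expect this to be the main obstacle, and a delicate one: a rank-one or collinear choice of the corrections collapses $aR_A^{k}+bR_B^{k}=cR_C^{k}$ back to a scalar Beal/Fermat equation, so the whole point is to place the three positive corrections in large enough dimension, with non-collinear supports, so that their $k$-th powers satisfy a genuinely \emph{matrix}-linear relation rather than a scalar one. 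Carrying out this dimension bookkeeping, and verifying that every entry can simultaneously be forced to be a strictly positive integer in infinitely many ways, is where the real work of the theorem lies.
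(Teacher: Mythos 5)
Your first assertion is handled correctly: taking $A=T_k(M_1)$, $B=T_k(M_2)$, $C=T_k(M_3)$ and using $T_k(M)^k=I_k\otimes M$ to reduce $aA^k+bB^k=cC^k$ to the linear equation $aM_1+bM_2=cM_3$ is a clean special case of the paper's construction. (The paper allows three different shifts $s_1,s_2,s_3$ with $ks_1\equiv ks_2\equiv ks_3 \pmod{n}$ and arbitrary parametric blocks, but the mechanism is the same: equal permutation parts make the $k$-th powers block-diagonal, so the degree-$k$ equation becomes a solvable system in the blocks.)

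The second assertion is where your proposal has a genuine gap --- you say so yourself --- and the specific repair you sketch cannot work. If $A_0=T_k(M_1)$, then $A_0x=0$ forces $x=(x_1,0,\dots,0)$ with $M_1x_1=0$, and $y^TA_0=0$ forces $y=(0,\dots,0,y_k)$ with $y_k^TM_1=0$. Hence any $R_A$ satisfying $A_0R_A=R_AA_0=0$ has all of its columns supported in the first block of rows and all of its rows supported in the last block of columns, i.e.\ $R_A$ lives entirely in the single block position $(1,k)$. No matter how low the rank of $M_1$, such a correction can never positivize the remaining zero blocks of $A_0+R_A$, so the two-sided-annihilator route is dead on arrival for these $A_0$. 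Your trace observation correctly rules out conjugation, but the paper circumvents the obstruction by multiplication rather than similarity: it first constrains the positive parameters so that the three block generalized permutation matrices $T,R,W$ pairwise commute and satisfy $aT^k+bR^k=cW^k$, then picks one matrix $D=p(T,R,W)$, a polynomial with positive coefficients whose monomials realize every cyclic shift, so that every entry of $D$ is a nonzero natural number, and sets $A=DT$, $B=DR$, $C=DW$. Commutativity gives $aA^k+bB^k-cC^k=D^k\bigl(aT^k+bR^k-cW^k\bigr)=0$, and each of $DT$, $DR$, $DW$ is entrywise a nonzero natural number because right-multiplying an entrywise-positive matrix by a generalized permutation matrix with positive nonzero entries merely permutes and rescales its columns. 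This common commuting factor is the missing idea in your proposal; note also that $A=DT$ is not similar to $T$, which is why the trace argument does not apply to it.
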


\begin{proof}  As usual, let $\ \pi_{s}(i)\equiv i+s\operatorname{mod}n, \ s\in$
$\{1,2,3..,n-1\}.$  Now choose $s_{1},s_{2},s_{3}\in$ $\{1,2,3.....,n-1\}$
such that \ $ks_{1}\equiv ks_{2}\equiv ks_{3}\operatorname{mod}n$ (obviously
our choice is valid at least for $s_{1}=s_{2}=s_{3}),$
then $\ \pi_{s_{1}}^{k}=\ \pi_{s_{2}}^{k}=\ \pi_{s_{3}}^{k}$ and
$P_{\pi_{s_{1}}^{k}}=P_{\pi_{s_{2}}^{k}}=P_{\pi_{s_{3}}^{k}}.$ Next, consider the
following three generalized permutation  block
$mn\times mn$  matrices:\\
$A_{\pi_{s_1}}=\left(
{\displaystyle\bigoplus\limits_{i=1}^{n}}
A(i,\pi_{s_1}(i))\right)  \biggl(P_{\pi_{s_1}}\otimes I_m\biggr)$, \  $B_{\pi_{s_2}}=\left(
{\displaystyle\bigoplus\limits_{i=1}^{n}}
B(i,\pi_{s_2}(i))\right)  \biggl(P_{\pi_{s_2}}\otimes I_m\biggr)$
 and  $C_{\pi_{s_3}}=\left(
{\displaystyle\bigoplus\limits_{i=1}^{n}}
C(i,\pi_{s_3}(i))\right)  \biggl(P_{\pi_{s_3}}\otimes I_m\biggr)$
  where
$A(i,\pi_{s_1}(i)),B(i,\pi_{s_2}(i))$  and $C(i,\pi_{s_{3}}(i))$ are $m\times m$  nonzero matrices whose nonzero entries are parameters  for $i=1,...,n$.   From Lemma 1.3, we get
$$ A_{\pi_{s_{1}}}^{k}=
\biggl(%
{\displaystyle\bigoplus\limits_{i=1}^{n}}
{\displaystyle\prod\limits_{j=1}^{k}}
A(i+(j-1)s_{1},i+js_{1})\biggr)\biggl(P_{\pi_{s_1}^k}\otimes I_m\biggr),$$

$$ B_{\pi_{s_{2}}}^{k}=
\biggl(%
{\displaystyle\bigoplus\limits_{i=1}^{n}}
{\displaystyle\prod\limits_{j=1}^{k}}
B(i+(j-1)s_{2},i+js_{2})\biggr)\biggl(P_{\pi_{s_2}^k}\otimes I_m\biggr),$$
$$ C_{\pi_{s_{3}}}^{k}=
\biggl(%
{\displaystyle\bigoplus\limits_{i=1}^{n}}
{\displaystyle\prod\limits_{j=1}^{k}}
C(i+(j-1)s_{3},i+js_{3})\biggr)\biggl(P_{\pi_{s_3}^k}\otimes I_m\biggr).$$  Since $\ \pi_{s_{1}}^{k}=\ \pi_{s_{2}}^{k}=\ \pi_{s_{3}}^{k}$, then
solving the following system  of equations,
\begin{align*}
 a\biggl(%
{\displaystyle\bigoplus\limits_{i=1}^{n}}
{\displaystyle\prod\limits_{j=1}^{k}}
A(i+(j-1)s_{1},i+js_{1})\biggr) + b \biggl(%
{\displaystyle\bigoplus\limits_{i=1}^{n}}
{\displaystyle\prod\limits_{j=1}^{k}}
B(i+(j-1)s_{2},i+js_{2})\biggr)  &= \ \ \ \ \ \ \ \ \ \ \ \ \   \\
    c\biggl(%
{\displaystyle\bigoplus\limits_{i=1}^{n}}
{\displaystyle\prod\limits_{j=1}^{k}}
C(i+(j-1)s_{3},i+js_{3})\biggr)
\end{align*}
 over $\mathbb{Z}$,
yields the desired result $aA_{\pi_{s_1}}^{k}+bB_{\pi_{s_2}}^{k}=cC_{\pi_{s_3}}^{k}.$

An inspection shows that we can always  find some relations between the parameters of these solution matrices to obtain an appropriate subfamily of solutions
$T, R$ and $W$ which are pairwise commuting matrices and all of whose  nonzero entries are positive integers.
Now, let $D=p(T,R,W)$ be any polynomial in  $T, R$ and $W$ whose all entries are nonzero natural numbers. Obviously, $D$ commutes with the matrices
$T, R$ and $W$.  Clearly, the matrices $A= DT,$  $B = DR$ and $C= DW$ have all their entries as nonzero natural numbers
and  $aA^{k}+bB^{k}-cC^{k}=$ $a\left(  DT\right)
^{k}+b\left(  DR\right)  ^{k}-c\left(  DW\right)  ^{k}=0$.
\end{proof}

As a conclusion, we have  the following.
\begin{corollary}
If $n$ a is prime, then for any integers $a$, $b$ and $c$ there is an infinite number of $n\times n$ matrices $A, \ B$ and
$C$ with integer entries such that $aA^{n}+bB^{n}=cC^{n}.$
\end{corollary}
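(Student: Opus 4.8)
The plan is to obtain this as a direct specialization of the preceding theorem to the exponent $k=n$, using the primality of $n$ to make the congruence constraint that governs that theorem's construction vacuous. First I would isolate the role of primality: since $n$ is prime, every $s\in\{1,2,\ldots,n-1\}$ satisfies $\gcd(n,s)=1$, so Lemma 1.1 gives $O(\pi_{s})=n$ and hence $\pi_{s}^{n}=\pi_{0}$ is the identity with $P_{\pi_{s}^{n}}=I_{n}$. In other words, when the exponent equals $n$, each $\pi_{s}$ with $s\neq 0$ is a single $n$-cycle whose $n$-th power is trivial.

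Next I would revisit the selection step in the preceding theorem, where one must choose $s_{1},s_{2},s_{3}\in\{1,\ldots,n-1\}$ with $ks_{1}\equiv ks_{2}\equiv ks_{3}\operatorname{mod}n$. Taking $k=n$, this condition becomes $ns_{1}\equiv ns_{2}\equiv ns_{3}\equiv 0\operatorname{mod}n$, which holds automatically for every choice of $s_{1},s_{2},s_{3}$. Thus the hypothesis that drove the earlier construction is satisfied for free, and I may invoke that theorem with $k=n$ without any restriction on the $s_i$. To land on genuinely $n\times n$ matrices rather than $mn\times mn$ ones, I would take the block size $m=1$, so that $A=A_{\pi_{s_{1}}}$, $B=B_{\pi_{s_{2}}}$, $C=C_{\pi_{s_{3}}}$ become ordinary $n\times n$ generalized permutation (monomial) matrices with scalar, nonzero integer entries.

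With $m=1$ and each $\pi_{s_i}$ a single $n$-cycle, Lemma 1.3 shows that $A^{n}$, $B^{n}$ and $C^{n}$ are each scalar multiples of $I_{n}$: the $i$-th diagonal entry of $A^{n}$ is the product of the $n$ entries of $A$ collected along the full cycle, which is the same for every $i$ and equals the product $P$ of all nonzero entries of $A$; write $Q$ and $R$ for the analogous products attached to $B$ and $C$. Consequently the matrix identity $aA^{n}+bB^{n}=cC^{n}$ collapses to the single scalar relation $aP+bQ=cR$ over $\mathbb{Z}$. This linear Diophantine equation has infinitely many integer solutions $(P,Q,R)$ (for instance by fixing all but one of the defining factors and solving the resulting linear relation), and since every integer is a product of $n$ integers, each of $P$, $Q$, $R$ is realized by a monomial matrix with nonzero integer entries, producing infinitely many $n\times n$ integer triples $A,B,C$ as required.

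The main thing to verify here — rather than a genuine obstacle — is the bookkeeping guaranteeing infinitely many admissible triples; this is inherited directly from the preceding theorem through the same device of scaling a base commuting solution by a suitable polynomial factor $D$. I expect no essential difficulty, since the content of the corollary is precisely that primality collapses the exponent-$n$ case into the unconstrained regime of that theorem; when the signs of $a$, $b$ and $c$ are compatible one may even arrange all entries to be natural numbers as there, but integer entries are all that is claimed.
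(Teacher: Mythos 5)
Your proposal is correct and follows the route the paper intends: the corollary is stated without proof as an immediate specialization of the preceding theorem to $k=n$ with block size $m=1$, where primality of $n$ (via Lemma 1.1) makes the congruence $ns_{1}\equiv ns_{2}\equiv ns_{3}\operatorname{mod}n$ automatic and forces $\pi_{s}^{n}$ to be the identity. Your explicit observation that each of $A^{n}$, $B^{n}$, $C^{n}$ then collapses to a scalar multiple of $I_{n}$, reducing the matrix equation to the single Diophantine relation $aP+bQ=cR$, actually supplies a concrete solvability argument that the paper's theorem leaves implicit, so it is a welcome sharpening rather than a deviation.
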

Another conclusion is given in the following.
\begin{corollary}  Let $a$, $b$ and $c$ be any integers. Then the equation $aA^{n}+bB^{p}=cC^{q}$ \ where $n, \ p$ and $q$ are
arbitrary positive integers,  has an infinite family of matrix solutions over
the square matrices with integer entries. In particular, we can select all
entries in the solution matrices to be natural numbers.
\end{corollary}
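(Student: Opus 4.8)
The plan is to reduce the mixed-exponent (Beal-type) equation to the equal-exponent (Fermat-type) case already settled above, namely the theorem asserting that for any integers $a,b,c$ and any \emph{single} exponent $k$ there is an infinite family of square integer matrices $A,B,C$, with all entries nonzero natural numbers, satisfying $aA^{k}+bB^{k}=cC^{k}$. The observation driving the reduction is that a $k$-th power is simultaneously an $n$-th, a $p$-th and a $q$-th power whenever $n,p,q$ all divide $k$; so one common exponent, chosen to be a common multiple of $n,p,q$, already encodes all three.

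First I would fix a common multiple $k$ of $n,p,q$, the simplest choice being $k=npq$ (or $k=\operatorname{lcm}(n,p,q)$), so that $k/n$, $k/p$ and $k/q$ are positive integers. Applying the equal-exponent theorem with this $k$ produces an infinite family of square integer matrices $A,B,C$ with all entries nonzero natural numbers and $aA^{k}+bB^{k}=cC^{k}$. Next I would set
\[
\tilde A=A^{k/n},\qquad \tilde B=B^{k/p},\qquad \tilde C=C^{k/q},
\]
and rewrite the identity as $a\bigl(A^{k/n}\bigr)^{n}+b\bigl(B^{k/p}\bigr)^{p}=c\bigl(C^{k/q}\bigr)^{q}$, which is exactly $a\tilde A^{\,n}+b\tilde B^{\,p}=c\tilde C^{\,q}$. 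Thus every solution of the equal-exponent equation furnishes a solution of the mixed-exponent one.

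It then remains to verify the two assertions. For integrality and positivity, note that a product of matrices whose entries are nonzero natural numbers again has all entries nonzero natural numbers (each entry is a sum of products of positive integers, hence a positive integer); consequently $\tilde A,\tilde B,\tilde C$, being fixed powers of $A,B,C$, inherit the property that every entry is a nonzero natural number. For the infinitude, I would exploit the infinite family supplied by the equal-exponent theorem: for each positive integer $t$ the scaled triple $(tA,tB,tC)$ still satisfies $a(tA)^{k}+b(tB)^{k}=c(tC)^{k}$ because the common factor $t^{k}$ cancels, and the corresponding triples $\bigl(t^{k/n}\tilde A,\,t^{k/p}\tilde B,\,t^{k/q}\tilde C\bigr)$, one for each $t$, are distinct for distinct $t$, yielding infinitely many solutions of $a\tilde A^{\,n}+b\tilde B^{\,p}=c\tilde C^{\,q}$.

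The step I expect to require the most care is the bookkeeping around the common multiple: one must pick $k$ so that $k/n,k/p,k/q$ are genuine integers (so that the root extraction is exact), and one must source the infinitude \emph{before} passing to powers, since, unlike the equal-exponent equation, the mixed-exponent equation is not invariant under the scaling $\tilde A\mapsto t\tilde A$, etc. An entirely parallel direct construction is also available and mirrors the proof of the preceding theorem: choose $s_1,s_2,s_3\in\{1,\ldots,N-1\}$ with $ns_1\equiv ps_2\equiv qs_3\pmod N$ (always possible, for instance with $N$ coprime to $npq$, so that $n,p,q$ are invertible modulo $N$ and a common nonzero target residue can be hit), form the block generalized permutation matrices $A_{\pi_{s_1}},B_{\pi_{s_2}},C_{\pi_{s_3}}$, and invoke Lemma 1.3 so that the three powers share the permutation part $P_{\pi_{s_1}^{\,n}}\otimes I_m$ and the equation collapses to a solvable direct sum of $m\times m$ block identities.
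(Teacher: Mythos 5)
Your proposal is correct and follows essentially the same route as the paper: set $k=\operatorname{lcm}(n,p,q)$, invoke the equal-exponent theorem to get $aA^{k}+bB^{k}=cC^{k}$, and regroup the powers as $a\bigl(A^{k/n}\bigr)^{n}+b\bigl(B^{k/p}\bigr)^{p}=c\bigl(C^{k/q}\bigr)^{q}$. Your added checks that positivity of entries survives taking powers and that infinitude must be sourced from the equal-exponent family are sensible refinements of the same argument, not a different proof.
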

\begin{proof} Let $k = lcm(n,p,q)$, then there exists positive integers $r, \ s$ and $t$ such
that $k=rn,\ k=sp$, and $k=tq$.
From the previous theorem, we know that there exists an infinite number of matrices such that
$aA^{k}+bB^{k}=cC^{k}.$ But this in turn means that $aA^{rn}+bB^{sp}=cC^{tq}.$ Therefore,  $
a\left (A^{r}\right)  ^{n}+ b\left( B^{s}\right)  ^{p}=  c\left (C^{t}\right)  ^{q},$
and the proof is complete.
\end{proof}
\begin{example}
In order to illustrate our results, we shall solve for positive integer matrices the following equation
$$4X^{5}=Y^{5}+3Z^{5}.$$

We start with following generalized permutation matrices whose all nonzero entries are considered as parameters

$$ T=\left(
\begin{array}
[c]{ccccc}
0 & m & 0 & 0 & 0\\
0 & 0 & n & 0 & 0\\
0 & 0 & 0 & p & 0\\
0 & 0 & 0 & 0 & q\\
r & 0 & 0 & 0 & 0
\end{array}
\right)  , \ R=\left(
\begin{array}
[c]{ccccc}%
0 & 0 & u & 0 & 0\\
0 & 0 & 0 & v & 0\\
0 & 0 & 0 & 0 & w\\
s & 0 & 0 & 0 & 0\\
0 & t & 0 & 0 & 0
\end{array}
\right)  , \ W=\left(
\begin{array}
[c]{ccccc}
0 & 0 & 0 & x & 0\\
0 & 0 & 0 & 0 & y\\
z & 0 & 0 & 0 & 0\\
0 & k & 0 & 0 & 0\\
0 & 0 & h & 0 & 0
\end{array}
\right)  $$
 We then find some relations among their entries so  that  $T, R$ and $W$ are pairwise commuting matrices. In particular, we consider the following setting

$$ T=\left(
\begin{array}
[c]{ccccc}
0 & tuw & 0 & 0 & 0\\
0 & 0 & suv & 0 & 0\\
0 & 0 & 0 & tvw & 0\\
0 & 0 & 0 & 0 & suw\\
stv & 0 & 0 & 0 & 0
\end{array}
\right)  ,  R=\left(
\begin{array}
[c]{ccccc}%
0 & 0 & u & 0 & 0\\
0 & 0 & 0 & v & 0\\
0 & 0 & 0 & 0 & w\\
s & 0 & 0 & 0 & 0\\
0 & t & 0 & 0 & 0
\end{array}
\right),  \mbox{  and }  $$  $$ W=\left(
\begin{array}
[c]{ccccc}%
0 & 0 & 0 & tuvw & 0\\
0 & 0 & 0 & 0 & suvw\\
stvw & 0 & 0 & 0 & 0\\
0 & stuw & 0 & 0 & 0\\
0 & 0 & stuv & 0 & 0
\end{array}
\right).  $$
By  solving the following system
$ 4T ^{5}-R^{5} -3W ^{5}=0 $
we obtain $stuvw-1=0$.  Taking  $s=\frac{1}{tuvw}$ and defining

$A=tuvw T =\left(
\begin{array}
[c]{ccccc}%
0 & t^{2}u^{2}vw^{2} & 0 & 0 & 0\\
0 & 0 & uv & 0 & 0\\
0 & 0 & 0 & t^{2}uv^{2}w^{2} & 0\\
0 & 0 & 0 & 0 & uw\\
tv & 0 & 0 & 0 & 0
\end{array}
\right)  $

$B=tuvw R =\left(
\begin{array}
[c]{ccccc}%
0 & 0 & tu^{2}vw & 0 & 0\\
0 & 0 & 0 & tuv^{2}w & 0\\
0 & 0 & 0 & 0 & tuvw^{2}\\
1 & 0 & 0 & 0 & 0\\
0 & t^{2}uvw & 0 & 0 & 0
\end{array}
\right)  $

$C=tuvw W =\left(
\begin{array}
[c]{ccccc}%
0 & 0 & 0 & t^{2}u^{2}v^{2}w^{2} & 0\\
0 & 0 & 0 & 0 & uvw\\
tvw & 0 & 0 & 0 & 0\\
0 & tuw & 0 & 0 & 0\\
0 & 0 & tuv & 0 & 0
\end{array}
\right) $ where $t, u, v$ and $w$ are now restricted to be in $\mathbb{N}$,
 then we clearly have   $4A^{5}-B^{5}-3C^{5}=0$.
 Next, let $r, a, b, c$ and $d$ be any positive integers,  and define the matrix
  \begin{align*} D &=rI_5  +aA +bB +cC +dC^3 \\
 &=\left(
\begin{array}
[c]{ccccc}%
r & at^{2}u^{2}vw^{2} & btu^{2}vw & ct^{2}u^{2}v^{2}w^{2} & dt^{3}u^{4}%
v^{3}w^{4}\\
dt^{2}u^{2}v^{3}w^{2} & r & auv & btuv^{2}w & cuvw\\
ctvw & dt^{4}u^{3}v^{3}w^{4} & r & at^{2}uv^{2}w^{2} & btuvw^{2}\\
b & ctuw & dt^{2}u^{3}v^{2}w^{2} & r & auw\\
atv & bt^{2}uvw & ctuv & dt^{4}u^{3}v^{4}w^{3} & r
\end{array}
\right)
\end{align*}
which obviously  commutes with $A, \ B$  and $C$. Up to this point, let\\
$X=DA=\left(
\begin{array}
[c]{ccccc}%
dt^{4}u^{4}v^{4}w^{4} & rt^{2}u^{2}vw^{2} & at^{2}u^{3}v^{2}w^{2} &
bt^{3}u^{3}v^{3}w^{3} & ct^{2}u^{3}v^{2}w^{3}\\
ctuv^{2}w & dt^{4}u^{4}v^{4}w^{4} & ruv & at^{2}u^{2}v^{3}w^{2} & btu^{2}%
v^{2}w^{2}\\
bt^{2}uv^{2}w^{2} & ct^{3}u^{2}v^{2}w^{3} & dt^{4}u^{4}v^{4}w^{4} &
rt^{2}uv^{2}w^{2} & at^{2}u^{2}v^{2}w^{3}\\
atuvw & bt^{2}u^{2}vw^{2} & ctu^{2}vw & dt^{4}u^{4}v^{4}w^{4} & ruw\\
rtv & at^{3}u^{2}v^{2}w^{2} & bt^{2}u^{2}v^{2}w & ct^{3}u^{2}v^{3}w^{2} &
dt^{4}u^{4}v^{4}w^{4}%
\end{array}
\right),  $\\
$Y=DB=\left(
\begin{array}
[c]{ccccc}%
ct^{2}u^{2}v^{2}w^{2} & dt^{5}u^{5}v^{4}w^{5} & rtu^{2}vw & at^{3}u^{3}%
v^{3}w^{3} & bt^{2}u^{3}v^{2}w^{3}\\
btuv^{2}w & ct^{2}u^{2}v^{2}w^{2} & dt^{3}u^{4}v^{4}w^{3} & rtuv^{2}w &
atu^{2}v^{2}w^{2}\\
at^{2}uv^{2}w^{2} & bt^{3}u^{2}v^{2}w^{3} & ct^{2}u^{2}v^{2}w^{2} &
dt^{5}u^{4}v^{5}w^{5} & rtuvw^{2}\\
r & at^{2}u^{2}vw^{2} & btu^{2}vw & ct^{2}u^{2}v^{2}w^{2} & dt^{3}u^{4}%
v^{3}w^{4}\\
dt^{4}u^{3}v^{4}w^{3} & rt^{2}uvw & at^{2}u^{2}v^{2}w & bt^{3}u^{2}v^{3}w^{2}
& ct^{2}u^{2}v^{2}w^{2}%
\end{array}
\right)  $ and \\
$Z=DC=\left(
\begin{array}
[c]{ccccc}%
bt^{2}u^{2}v^{2}w^{2} & ct^{3}u^{3}v^{2}w^{3} & dt^{4}u^{5}v^{4}w^{4} &
rt^{2}u^{2}v^{2}w^{2} & at^{2}u^{3}v^{2}w^{3}\\
atuv^{2}w & bt^{2}u^{2}v^{2}w^{2} & ctu^{2}v^{2}w & dt^{4}u^{4}v^{5}w^{4} &
ruvw\\
rtvw & at^{3}u^{2}v^{2}w^{3} & bt^{2}u^{2}v^{2}w^{2} & ct^{3}u^{2}v^{3}w^{3} &
dt^{4}u^{4}v^{4}w^{5}\\
dt^{3}u^{3}v^{3}w^{3} & rtuw & atu^{2}vw & bt^{2}u^{2}v^{2}w^{2} &
ctu^{2}vw^{2}\\
ct^{2}uv^{2}w & dt^{5}u^{4}v^{4}w^{4} & rtuv & at^{3}u^{2}v^{3}w^{2} &
bt^{2}u^{2}v^{2}w^{2}%
\end{array}
\right) ,$
then it is easy to check that
$4X^{5}-Y^{5}-3Z^{5}=0$.

\end{example}

\section*{Acknowledgments}

 The first author is supported from LIU.

\end{document}